\newtheorem{thm}{Theorem}[section]
\newtheorem{lem}[thm]{Lemma}
\newtheorem{prop}[thm]{Proposition}
\theoremstyle{definition}
\newtheorem{defn}[thm]{Definition}
\theoremstyle{remark}
\newtheorem{rem}[thm]{Remark}
\numberwithin{equation}{section}
\newcommand{\norm}[1]{\left\Vert#1\right\Vert}
\newcommand{\be}{\begin{equation*}}
\newcommand{\ee}{\end{equation*}}
\begin{document}
\author{Beyaz Ba\c{s}ak Koca}
\address[Beyaz Ba\c{s}ak Koca]{Department of Mathematics, Faculty of Science \.{I}stanbul University, Veznec\.{I}ler, 34134, \.{I}stanbul, Turkey }
\email{basakoca@istanbul.edu.tr}

\author{S\.{I}bel \c{S}ah\.{I}n}
\address[Sibel \c{S}ahin]{Department of Natural and Mathematical Sciences,\"{O}zye\v{g}\.{I}n University, \c{C}ekmek\"{o}y, 34794, \.{I}stanbul, Turkey }
\email{sahinsibel@sabanciuniv.edu}

\title[Beurling-type invariant subspaces]{Beurling-type invariant subspaces of the Poletsky-Stessin Hardy spaces in the bidisc}

\dedicatory{Dedicated to Prof.Dr. Naz{\i}m Sad{\i}k on the occasion
of his 65th birthday}

\subjclass{Primary 47A15, Secondary 32C15}
\keywords{Poletsky-Stessin Hardy Space, Beurling-type invariant
subspace, Vector valued Hardy spaces} \maketitle
\date{\today}

\begin{abstract}
The invariant subspaces of the Hardy space on $H^2(\mathbb{D})$ of
the unit disc are very well known however in several variables the
structure of the invariant subspaces of the classical Hardy spaces
is not yet fully understood. In this study we examine the invariant
subspace problem for Poletsky-Stessin Hardy spaces which is a
natural generalization of the classical Hardy spaces to hyperconvex
domains in $\mathbb{C}^n$. We showed that not all invariant
subspaces of $H^{2}_{\tilde{u}}(\mathbb{D}^2)$ are of Beurling-type.
To characterize the Beurling-type invariant subspaces of this space
we first generalized the Lax-Halmos theorem of vector valued Hardy
spaces to the vector valued Poletsky-Stessin Hardy spaces and then
we give a necessary and sufficient condition for the invariant
subspaces of $H^{2}_{\tilde{u}}(\mathbb{D}^2)$ to be of
Beurling-type.
\end{abstract}

\section*{Introduction}

In \cite{Beurling}, Beurling described all invariant subspaces for
the operator of multiplication by $z$ on the Hardy-Hilbert space
$H^2(\mathbb{D})$ of the unit disk. In $H^2(\mathbb{D})$, all
invariant subspaces are of Beurling-type i.e. they are of the form
$fH^2(\mathbb{D})$ where $f$ is an inner function in
$H^2(\mathbb{D})$. However, in several variable case the structure
of the invariant subspaces cannot be characterized in such a simple
form. Although it is quite clear that the Beurling-type subspaces,
are invariant; it is known that not all invariant subspaces are of
this form. In \cite{Jacewicz}, Jacewicz gave an example of an
invariant subspace which can be generated by two functions but
cannot be generated by a single function. Later, Rudin \cite{Rudin}
gave an example of an invariant subspace which cannot be generated
by finitely many elements. There are several studies that try to
characterize the Beurling-type invariant subspaces of
$H^2(\mathbb{D}^{2})$ and in this study we are going to generalize
one of these studies given by Sad{\i}kov in \cite{Sadik}.\\ In 2008,
Poletsky and Stessin introduced Poletsky-Stessin Hardy spaces and
generalized the theory of Hardy spaces to hyperconvex domains in
$\mathbb{C}^n$. The structure of these spaces is examined in detail
in \cite{Poletsky,Sahin,Alan,Shresta}. Hence, it is natural to ask
the invariant subspace problem in the case of Poletsky-Stessin Hardy
spaces. In the case of unit disk, Alan and G\"{o}\v{g}\"{u}\c{s}
\cite{Alan} showed that all invariant subspaces of the
Poletsky-Stessin Hardy space $H^2_{u}(\mathbb{D})$ are of
Beurling-type. In this study we are going to consider the
multivariable case for the Poletsky-Stessin Hardy space
$H^{2}_{\tilde{u}}(\mathbb{D}^2)$ of the bidisc. First of all using
analogous methods to Jacewicz we will show that there exists an
invariant subspace of $H^{2}_{\tilde{u}}(\mathbb{D}^2)$ which is not
of Beurling-type. Then, we are going to generalize the classical
Lax-Halmos theorem to $H^{2}_{\tilde{u}}(\mathbb{D}^2)$ using the
methods of vector valued Hardy spaces. Lastly, we are going to
characterize the Beurling-type invariant subspaces of
$H^{2}_{\tilde{u}}(\mathbb{D}^2)$ by generalizing the ideas of
Sad{\i}kov \cite{Sadik} that he used for the Beurling-type invariant
subspaces of classical Hardy space $H^{2}(\mathbb{D}^2)$.

\section{Preliminaries}
In this section we will give the preliminary definitions and some
important results that we will use throughout this study. Before
proceeding with Poletsky-Stessin Hardy spaces let us first recall
the classical Hardy spaces of the polydisc given in \cite{Rudin}:
\begin{defn}
Hardy spaces on the unit polydisc of $\mathbb{C}^{n}$ are defined
for $1\leq p\leq\infty$ as :
\begin{equation*}
H^{p}(\mathbb{D}^{n})=\{f\in\mathcal{O}(\mathbb{D}^{n}):\sup_{0<r<1}(\frac{1}{(2\pi)^{n}}\int_{\mathbb{T}^{n}}|f(rz)|^{p}d\mu)^{\frac{1}{p}}<\infty\}
\end{equation*}
where $\mathbb{T}^{n}$ is torus and $\mu$ is the usual product
measure on the torus. And
\begin{equation*}
H^{\infty}(\mathbb{D}^{n})=\{f\in\mathcal{O}(\mathbb{D}^{n}):\sup_{z\in\mathbb{D}^{n}}|f(z)|<\infty\}
\end{equation*}
\end{defn}
In \cite{Poletsky}, Poletsky \& Stessin introduced new Hardy type
classes of holomorphic functions on hyperconvex domains $\Omega$ in
$\mathbb{C}^{n}$. Before defining these new classes let us first
give some preliminary definitions. Let $\varphi:\Omega\rightarrow
[-\infty,0)$ be a negative, continuous, plurisubharmonic exhaustion
function for $\Omega$. Following \cite{Demailly} we define the
pseudoball:
\begin{equation*}\label{eq:pseudoball}
B(r)=\{z\in\Omega:\varphi(z)<r\}\quad ,\quad r\in[-\infty,0),
\end{equation*}
and pseudosphere:
\begin{equation*}\label{eq:pseudosphere}
S(r)=\{z\in\Omega:\varphi(z)=r\}\quad ,\quad  r\in[-\infty,0),
\end{equation*}
and set
\begin{equation*}
\varphi_{r}(z)= \max\{\varphi(z),r\}\quad ,\quad r\in(-\infty,0).
\end{equation*} \\ In \cite{Demailly}, Demailly introduced the Monge-Amp\`ere
measures in the sense of currents as :
\begin{equation*}\label{eq:mameasure}
\mu_{\varphi,r}=(dd^{c}\varphi_{r})^{n}-\chi_{\Omega\setminus
B(r)}(dd^{c}\varphi)^{n}\quad r\in(-\infty,0).
\end{equation*}
In this study we will use the boundary value characterization of
Poletsky-Stessin Hardy spaces in most of the results so let us also
mention boundary measures which were introduced by Demailly in
\cite{Demailly2}. Now let $\varphi:\Omega\rightarrow[-\infty,0)$ be
a continuous, plurisubharmonic exhaustion for $\Omega$ and suppose
that the total Monge-Amp\`ere mass is finite that is, we assume that
\begin{equation}
MA(\varphi)=\int_{\Omega}(dd^{c}\varphi)^{n}<\infty.
\end{equation}
Then as $r$ approaches to 0, $\mu_{\varphi,r}$ converges to a
positive measure $\mu_{\varphi}$ weak*-ly on $\Omega$ with total
mass $\int_{\Omega}(dd^{c}\varphi)^{n}$ and supported on
$\partial\Omega$. This measure $\mu_{\varphi}$ is called the
\textbf{Monge-Amp\`ere measure on the boundary associated with the
exhaustion $\varphi$}. As a consequence of (\cite{Sahin},
Proposition 2.2.3) we know that the boundary Monge-Amp\`ere measure
$d\mu_{u}$ is mutually absolutely continuous with respect to the
Euclidean measure on the unit circle and we have,
\begin{equation}\label{rad-nyk der}
d\mu_{u}=\beta(\theta)d\theta
\end{equation}
for a positive $L^1$ function $\beta$ which is defined as

 \[\beta(\theta)=\int_{\mathbb{D}}P(z,e^{i\theta})dd^cu(z).\]
Now we can introduce the Poletsky-Stessin Hardy classes, which will
be our main focus throughout this study. In \cite{Poletsky},
Poletsky \& Stessin gave the definition of new Hardy type spaces
using Monge-Amp\'ere measures as :
\begin{defn}
$H_{\varphi}^{p}(\Omega)$ for $p>0$, is the space of functions
$f\in\mathcal{O}(\Omega)$ such that
\begin{equation*}
\limsup_{r\to 0^{-}}\int_{S_{\varphi,}(r)}|f|^{p}
d\mu_{\varphi,r}<\infty.
\end{equation*}
\end{defn}
The norm on these spaces is given by:
\begin{equation*}
\|f\|_{H_{\varphi}^{p}}=\left(\lim_{r\to
0^{-}}\int_{S_{\varphi}(r)}|f|^{p}
d\mu_{\varphi,r}\right)^{\frac{1}{p}}
\end{equation*}and with respect to these norm the spaces
$H_{\varphi}^{p}(\Omega)$ are Banach spaces \cite{Poletsky}.\\
In Poletsky-Stessin Hardy spaces of the unit disk we have the
canonical inner-outer factorization analogous to the classical Hardy
space case (\cite{sah1}, Theorem 4.2) but throughout this study we
will consider a special type of inner functions which is defined in
\cite{Alan} as follows:
\begin{defn}
Let $u$ be a continuous, subharmonic exhaustion function for
$\mathbb{D}$. A function $\phi\in H^2_u(\mathbb{D})$ is a
\emph{$u$-inner} function if $|\phi^*(\xi)|^{2}\beta(\xi)$ equals to
$1$ for almost every $\xi\in \mathbb{T}$ where $\beta$ is the
function given in (\ref{rad-nyk der}).
\end{defn}

\begin{rem}
The set of $u$-inner functions is non-trivial. In fact first all we
need to show that there is a holomorphic function $\phi$ such that
$|\phi^*|=\frac{1}{\sqrt{\beta}}$. Now since
$\beta(\xi)=\int_{\mathbb{D}}P(z,\xi)dd^cu(z)$ it is a strictly
positive function, $\beta(\xi)>c$ for some $c>0$ so
$\frac{1}{\sqrt{\beta}}$ is a bounded, positive function. Then by
\cite[3.5.1]{Rudin} we know that we have an analytic function
\[\phi(z)=\exp\left\{\int_{\mathbb{T}}\frac{\xi+z}{\xi-z}\log\frac{1}{\sqrt{\beta}}\,d\xi\right\},\;\; z\in \mathbb{D}\]
so that $\phi\in H^\infty(\mathbb{D})$ and
$|\phi^*|=\frac{1}{\sqrt{\beta}}$ a.e. on $\mathbb{T}$. Then
$|\phi^*|^2\beta=1$ a.e. on $\mathbb{T}$ and
$H^\infty(\mathbb{D})\subset H_u^2(\mathbb{D})$ so $\phi\in
H_u^2(\mathbb{D})$.
\end{rem}

In the following sections our main focus will be on the Poletsky-Stessin Hardy space, $H^{2}_{\tilde{u}}(\mathbb{D}^{2})$ of the bidisc generated by the following special type of exhaustion function:\\
Let $u$ be an exhaustion function of the unit disc $\mathbb{D}$ with
finite Monge-Amp\`ere mass. Then the following plurisubharmonic
function,
\begin{equation*}
\tilde{u}(z,w)=\max\{u(z),u(w)\}
\end{equation*}
is an exhaustion for the unit bidisc $\mathbb{D}^{2}$. For this
exhaustion function $\tilde{u}$, the corresponding boundary
Monge-Amp\`ere measure on the torus $\mathbb{T}^2$ is given as
follows (\cite{Sahin}, Theorem 3.2.1) : \be
d\mu_{\tilde{u}}(\theta_{1},\theta_{2})=d\mu_{u}(\theta_{1})d\mu_{u}(\theta_{2})=\beta(\theta_{1})\beta(\theta_{2})d\theta_{1}d\theta_{2}.
\ee

By \cite[3.5.2]{Rudin} we can generalize the definition of $u$-inner
function to $\mathbb{D}^2$ so for a plurisubharmonic exhaustion
function $\tilde u$ of $\mathbb{D}^2$ we can find a holomorphic
function $\phi$ on $\mathbb{D}^2$ (which is called $\tilde u$-inner)
such that $\phi\in H_{\tilde u}^2(\mathbb{D}^2)$ and
$|\phi^*(\xi,\eta)|^2\beta(\xi)\beta(\eta)=1$ a.e. on $\mathbb{T}^2$
where $d\mu_{\tilde{u}}(\xi,\eta)=\beta(\xi)\beta(\eta)d\xi d\eta$.

We call $M$ an invariant subspace of
$H^{2}_{\tilde{u}}(\mathbb{D}^{2})$ if (a) $M$ is a closed linear
subspace of $H^{2}_{\tilde{u}}(\mathbb{D}^{2})$ and (b) $f\in M$
implies $zf\in M$ and $wf\in M$, i.e., multiplication by polynomials
maps $M$ into $M$. An invariant subspace $M$ is called Beurling type
if it is of the form $M=\phi H^2(\mathbb{D}^2)$, where $\phi$ is
$\tilde u$-inner.

In one variable case Alan and G\"{o}\v{g}\"{u}\c{s} \cite[Theorem
3.2]{Alan} extended the classical characterization of invariant
subspaces to the Poletsky-Stessin Hardy spaces as follows:

\begin{thm}
Let $M\neq \{0\}$ be an invariant subspace of
$H^{2}_{u}(\mathbb{D})$. Then there exists a \emph{$u$-inner}
function $\phi$ so that $M=\phi H^2(\mathbb{D})$.
\end{thm}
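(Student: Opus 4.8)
The plan is to reduce the statement to the classical Beurling theorem on $H^2(\mathbb{D})$ by exhibiting an explicit isometric isomorphism between $H^2_u(\mathbb{D})$ and $H^2(\mathbb{D})$ that intertwines the two shift operators. The bridge is the distinguished outer function produced in the Remark: let $\phi_0$ be the \emph{$u$-inner} outer function with $\abs{\phi_0^*}^2 = 1/\beta$ a.e. on $\mathbb{T}$, so that $\abs{\phi_0^*}^2\beta = 1$. Since $\beta$ is bounded below by a positive constant, $\phi_0 \in H^\infty(\mathbb{D}) \subseteq H^2_u(\mathbb{D})$ and, being outer, $\phi_0$ has no zeros in $\mathbb{D}$.

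Next I would define $T : H^2(\mathbb{D}) \to H^2_u(\mathbb{D})$ by $Tf = \phi_0 f$. Using the boundary value description of the $H^2_u$-norm together with $d\mu_u = \beta\,d\theta$, one computes
\be
\norm{Tf}_{H^2_u}^2 = \int_{\mathbb{T}} \abs{\phi_0^* f^*}^2 \beta\, d\theta = \int_{\mathbb{T}} \abs{f^*}^2\, d\theta = \norm{f}_{H^2}^2,
\ee
so that $T$ is an isometry, and the identity $T(zf) = z\,Tf$ shows that $T$ intertwines multiplication by $z$ on the two spaces. The crux is surjectivity: given $g \in H^2_u(\mathbb{D})$, the quotient $g/\phi_0$ is holomorphic on $\mathbb{D}$ because $\phi_0$ is zero-free, and its boundary values satisfy $\int_{\mathbb{T}} \abs{g^*/\phi_0^*}^2\,d\theta = \int_{\mathbb{T}} \abs{g^*}^2\beta\,d\theta = \norm{g}_{H^2_u}^2 < \infty$; invoking the fact that functions in $H^2_u(\mathbb{D})$ lie in the Smirnov class, $g/\phi_0$ belongs to $N^+$ and has $L^2$ boundary values, hence $g/\phi_0 \in H^2(\mathbb{D})$ and $g = T(g/\phi_0)$. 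Thus $T$ is a unitary isomorphism carrying the shift to the shift.

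With $T$ in hand the theorem follows quickly. If $M \neq \set{0}$ is an invariant subspace of $H^2_u(\mathbb{D})$, then $N := T^{-1}M$ is a nonzero closed $z$-invariant subspace of $H^2(\mathbb{D})$, so by Beurling's theorem $N = \psi H^2(\mathbb{D})$ for a classical inner function $\psi$, i.e. $\abs{\psi^*} = 1$ a.e. Applying $T$ gives $M = \phi_0\psi\,H^2(\mathbb{D})$. Setting $\phi := \phi_0\psi$, we have $\abs{\phi^*}^2\beta = \abs{\phi_0^*}^2\abs{\psi^*}^2\beta = 1$ a.e., so $\phi$ is \emph{$u$-inner}, and $M = \phi H^2(\mathbb{D})$ as required. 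I expect the main technical obstacle to be the two boundary-value steps underlying the isometry and the surjectivity — namely the multiplicativity $(\phi_0 f)^* = \phi_0^* f^*$ of boundary values and the Smirnov-class argument that places $g/\phi_0$ back in $H^2(\mathbb{D})$ — rather than the soft transport argument itself.
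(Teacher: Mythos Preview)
Your argument is correct. The paper does not prove this statement itself; it is quoted in the preliminaries as a result of Alan and G\"{o}\v{g}\"{u}\c{s} \cite{Alan}. That said, your unitary $T\colon f \mapsto \phi_0 f$ is exactly the isometric isomorphism the paper later imports from Shresta \cite{Shresta}: with $A$ the outer function satisfying $\abs{A^*} = \beta$ on $\mathbb{T}$, your $\phi_0$ equals $A^{-1/2}$ up to a unimodular constant, so $T$ is the inverse of Shresta's map $f \mapsto A^{1/2}f$. The division-and-boundary-estimate behind your surjectivity step is also what the paper uses, essentially verbatim, to prove the vector-valued identity $H_u^2(H_u^2(\mathbb{D})) = \varphi\, H^2(H_u^2(\mathbb{D}))$ in Lemma~\ref{*}. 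One small tightening of the Smirnov step you flagged: the inclusion $H^2_u(\mathbb{D}) \subset N^+$ yields only $g \in N^+$; to conclude $g/\phi_0 \in N^+$ you also need $1/\phi_0 \in N^+$, which holds because $\phi_0$ is outer --- then Smirnov's theorem applies as you say.
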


\section{Main Results}
In this section we first show that the Poletsky-Stessin Hardy space
on the bidisc has an invariant subspaces which is not of the form
$fH^2(\mathbb{D}^2)$ for any $f\in H_{\tilde u}^2(\mathbb{D}^2)$ in
contrast to one variable case. Before proceeding, we recall that
$H^2(\mathbb{D}^2)$ can be seen as a closed subspace
$H^2(\mathbb{T}^2)$ of the standard Lebesgue space
$L^2(\mathbb{T}^2)$ which consists of the functions in
$L^2(\mathbb{T}^2)$ with Fourier coefficients vanishing off a pair
of nonnegative integers. To each function $f$ in $H^2(\mathbb{T}^2)$
with Fourier series $\sum_{m,n=0}^\infty
a_{mn}e^{im\theta_1}e^{in\theta_2}$ we associate the function
$\sum_{m,n=0}^\infty a_{mn}z^mw^n$ analytic on $\mathbb{D}^2$ which
we also denote by $f$. For more details, see \cite{Rudin}. Note that
since $H_{\tilde u}^2(\mathbb{D}^2)$ is a subspace of
$H^2(\mathbb{D}^2)$ by \cite[p.54]{Sahin}, every function in
$H_{\tilde u}^2(\mathbb{D}^2)$ also has the Fourier representation
above.

\begin{thm}\label{example}
There exists an invariant subspace $M$ of $H_{\tilde
u}^2(\mathbb{D}^2)$ which is of the form
$M=f_1H^2(\mathbb{D}^2)+f_2H^2(\mathbb{D}^2)$ for some $f_1,f_2\in
H_{\tilde u}^2(\mathbb{D}^2)$ but can not be of the form
$M=hH^2(\mathbb{D}^2)$ for any $h\in H_{\tilde u}^2(\mathbb{D}^2)$.
\end{thm}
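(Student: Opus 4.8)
The plan is to mimic Jacewicz's classical construction, transplanted into the Poletsky--Stessin setting. I would take the invariant subspace generated by two monomials that share no common nontrivial factor, the natural candidate being
\[
M = z\,H^2(\mathbb{D}^2) + w\,H^2(\mathbb{D}^2),
\]
which is precisely the set of functions in $H^2(\mathbb{D}^2)$ vanishing at the origin (equivalently, with zero constant Fourier coefficient $a_{00}=0$). First I would verify the easy half: $M$ is a closed $z$- and $w$-invariant subspace of $H^2(\mathbb{D}^2)$, and $f_1=z$, $f_2=w$ both lie in $H_{\tilde u}^2(\mathbb{D}^2)$ since bounded holomorphic functions sit inside $H_{\tilde u}^2(\mathbb{D}^2)$ (indeed $H^\infty\subset H_{\tilde u}^2$ by the continuity of $d\mu_{\tilde u}$ and its finite total mass). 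This establishes the first form claimed in the statement.

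The main work is the impossibility half: showing no single $h\in H_{\tilde u}^2(\mathbb{D}^2)$ can satisfy $M=hH^2(\mathbb{D}^2)$. I would argue by contradiction. If $M=hH^2(\mathbb{D}^2)$, then since $z,w\in M$ we could write $z=hg_1$ and $w=hg_2$ with $g_1,g_2\in H^2(\mathbb{D}^2)$. The key is a divisibility/degree argument: comparing the lowest-order homogeneous terms of the Taylor expansions at the origin forces $h(0,0)\ne 0$ (because $z$ has a first-order zero that $h$ alone would have to account for, yet $w=hg_2$ would then demand the same of the $w$-direction, which is incompatible unless $h$ is a nonvanishing unit). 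More carefully, from $z=hg_1$ the function $h$ must divide $z$ in the ring of germs, and likewise divide $w$; since $z$ and $w$ are coprime irreducibles in $\mathcal{O}(\mathbb{D}^2)$, $h$ must be a nonvanishing holomorphic function, hence $h$ is invertible in $H^2(\mathbb{D}^2)$ as a multiplier, which would give $M=H^2(\mathbb{D}^2)$. But the constant function $1\notin M$ since every element of $M$ has $a_{00}=0$, a contradiction.

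I expect the main obstacle to be making the divisibility step rigorous at the level of $H^2$ (rather than merely germs), since membership $g_i\in H^2(\mathbb{D}^2)$ is a global growth condition, not just local holomorphy. To handle this cleanly I would pass to boundary values on $\mathbb{T}^2$ and use the Fourier-coefficient description: writing $h$ with boundary expansion $h^*\sim\sum h_{mn}e^{im\theta_1}e^{in\theta_2}$, the relations $z=hg_1,\ w=hg_2$ become convolution identities on $\mathbb{T}^2$, and I would extract the contradiction by examining the lowest-degree coefficients and invoking that $h^*$ cannot vanish on a set of positive measure (its modulus is controlled by the $\tilde u$-inner normalization $|\phi^*|^2\beta\beta=1$ when $h$ is assumed $\tilde u$-inner). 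The cleanest route, and the one I would ultimately take, is to avoid factorization theory entirely: observe that $M$ has codimension-type structure incompatible with the principal form by exhibiting two elements of $M$ whose ``greatest common divisor'' is trivial, so that $hH^2(\mathbb{D}^2)\subseteq M$ with $z,w\in hH^2$ forces $h$ to be a unit, contradicting $1\notin M$. Verifying $1\notin M$ is immediate from the Fourier representation guaranteed for every element of $H_{\tilde u}^2(\mathbb{D}^2)$ by \cite[p.54]{Sahin}.
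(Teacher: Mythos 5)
Your construction fails at the very first step, and the failure is specific to the Poletsky--Stessin setting rather than a technicality. The theorem (and the paper's definition of invariant subspace) requires $M$ to be a closed subspace \emph{of} $H_{\tilde u}^2(\mathbb{D}^2)$, i.e.\ $M\subseteq H_{\tilde u}^2(\mathbb{D}^2)$. Your candidate $M=zH^2(\mathbb{D}^2)+wH^2(\mathbb{D}^2)$ is in general not contained in $H_{\tilde u}^2(\mathbb{D}^2)$: although $z,w\in H_{\tilde u}^2(\mathbb{D}^2)$, membership of the generators does not give $zH^2\subseteq H_{\tilde u}^2$, since for $g\in H^2(\mathbb{D}^2)$ one has
\[
\|zg\|^2_{H^2_{\tilde u}(\mathbb{D}^2)}=\int_{\mathbb{T}^2}|g^*(\xi,\eta)|^2\,\beta(\xi)\beta(\eta)\,d\xi\,d\eta,
\]
and this is finite for \emph{every} $g\in H^2(\mathbb{D}^2)$ only when $\beta$ is essentially bounded, which is exactly the degenerate case $H_{\tilde u}^2(\mathbb{D}^2)=H^2(\mathbb{D}^2)$. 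Whenever $\beta\in L^1\setminus L^\infty$ (the situation the Poletsky--Stessin theory exists to handle), there is $g\in H^2(\mathbb{D}^2)$ whose weighted boundary integral diverges, so $zg\notin H_{\tilde u}^2(\mathbb{D}^2)$ and your $M$ is not a subspace of $H_{\tilde u}^2(\mathbb{D}^2)$ at all. This is precisely why the paper does not use bare generators: it takes $f_1=\phi(z)\phi(w)q(z)$, $f_2=\phi(z)\phi(w)w$, where the factor $\phi(z)\phi(w)$ is $\tilde u$-inner ($|\phi^*|^2\beta\equiv 1$ a.e.), so that multiplication by $\phi(z)\phi(w)$ is an isometry of $H^2(\mathbb{D}^2)$ into $H_{\tilde u}^2(\mathbb{D}^2)$ and the containment $f_iH^2\subseteq H_{\tilde u}^2$ is automatic.

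There is also a flaw in your impossibility half: the step ``$h$ nonvanishing, hence invertible in $H^2(\mathbb{D}^2)$ as a multiplier, so $M=H^2(\mathbb{D}^2)$'' is false --- a nonvanishing holomorphic function need not be an invertible multiplier (any singular inner function, e.g.\ $\exp\left(-\frac{1+z}{1-z}\right)$, is nonvanishing yet generates a proper subspace). What your coprimality/zero-set argument legitimately yields is only that $h$ has no zeros; the contradiction should instead be closed by noting $h=h\cdot 1\in hH^2=M$, so $h(0,0)=0$, while $Z(h)\subseteq Z(z)\cap Z(w)=\{(0,0)\}$ forces $Z(h)=\emptyset$ because zero sets of holomorphic functions of two variables have no isolated points. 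With both repairs --- generators $f_1=\phi(z)\phi(w)z$, $f_2=\phi(z)\phi(w)w$ and the finish via $h\in M$ --- your route would become a correct proof, and in fact a genuinely different and more elementary one than the paper's, which instead follows Jacewicz: it uses a nonconstant singular inner factor $q$, half-plane spaces $H^2(S_1)$, $H^2(S_2)$, and a projection onto $H_u^2(\mathbb{D})$ to force the absurd conclusion that $q$ is outer. As submitted, however, the proposal does not prove the theorem.
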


\begin{proof}

We choose $f_1(z,w)=\phi(z)\phi(w)q(z)$ and
$f_2(z,w)=\phi(z)\phi(w)w$, where $\phi$ is a non-vanishing
$u$-inner function in $H_u^2(\mathbb{D})$, $q$ is a nonconstant,
singular inner function in $H^2(\mathbb{D})$, which means that $q$
never vanishes in $\mathbb{D}$ and has modulus one a.e. on
$\mathbb{T}$. It is clear that $f_2\in H_{\tilde u}^2(\mathbb{D}^2)$
and since

\begin{equation*}
\begin{split}
||f_1||^2_{H_{\tilde u}^2(\mathbb{D}^2)} & = \int_{\mathbb{T}}\int_{\mathbb{T}}|\phi^*(z)\phi^*(w)q^*(z)|^2d\mu_u(z)d\mu_u(w)\\
 & =
 \int_{\mathbb{T}}\int_{\mathbb{T}}|\phi^*(z)|^2|\phi^*(w)|^2|q^*(z)|^2d\mu_u(z)d\mu_u(w)\\
 &=\int_{\mathbb{T}}\int_{\mathbb{T}}|q^*(z)|^2dzdw\\
 &= \int_{\mathbb{T}}\int_{\mathbb{T}}dzdw  < \infty,
\end{split}
\end{equation*}
$f_1\in H_{\tilde u}^2(\mathbb{D}^2)$. Consider
$M=f_1H^2(\mathbb{D}^2)+f_2H^2(\mathbb{D}^2)$. It is easily seen
that $M$ is an invariant subspace of $ H_{\tilde
u}^2(\mathbb{D}^2)$.

Suppose that $M$ is of the form $M=hH^2(\mathbb{D}^2)$ for any $h\in
H_{\tilde u}^2(\mathbb{D}^2)$. Let $H^2(S_1)$ denote the subspace of
$L^2(\mathbb{T}^2)$ consisting of functions whose Fourier
coefficients vanish off the half-plane
$S_1=\{(m,n)\in\mathbb{Z}^2:m>0\}\cup
\{(0,n)\in\mathbb{Z}^2:n\geq0\}$. It is clear that $M_1:=hH^2(S_1)$
is the invariant subspace of $H^2(S_1)$. If $q$ and $\phi$ have the
form $q(z)=\sum_{m=0}^\infty a_mz^m$ and $\phi(z)=\sum_{m=0}^\infty
b_mz^m$ respectively, then we see that
\[b_0^2a_0=\phi(z)\phi(w)\left(\sum_{m=0}^\infty a_mz^m-\sum_{m=1}^\infty a_m z^m w^{-1}w\right)=f_1-\sum_{m=1}^\infty a_m(z^m w^{-1})f_2\]
lies in $M_1$. Because $c_m=(m,-1)\in S_1$ for $m\geq 1$ and so
$c_mf_2=z^mw^{-1}f_2\in M_1$. Since $q$ is a singular function and
$\phi$ is non-vanishing, $b_0^2a_0=\phi(0)\phi(0)q(0)\neq 0$, so
that the constant functions lie in $M_1$. Thus
$M_1=hH^2(S_1)=H^2(S_1)$. This property of $h$ for any half-plane
containing the support of Fourier transform of $h$ is equivalent to
an analytic condition independent of the half-plane
\cite[p.128]{Jacewicz}. In particular $hH^2(S_2)=H^2(S_2)$ for the
half-plane $S_2=\{(m,n)\in\mathbb{Z}^2:n>0\}
\cup\{(m,0)\in\mathbb{Z}^2: m\geq0\}$.

Let $P$ be the orthogonal projection of $H^2(S_2)$ onto
$H_u^2(\mathbb{D})$ (Remark that the Fourier coefficients of the
element of $H_u^2(\mathbb{D})$ are zero for $m<0$). The invariant
subspaces of the form $f_1H^2(\mathbb{D}^2)+f_2H^2(\mathbb{D}^2)$
and $hH^2(\mathbb{D}^2)$ are the same. Since $S_2$ contains the set
$\{(m,n): m\geq 0,n\geq 0\}$, the invariant subspaces
$f_1H^2(S_2)+f_2H^2(S_2)$ and $hH^2(S_2)$ are the same. These
subspaces are denoted by $M_2(f_1,f_2)$ and $M_2(h)$, respectively.
$P[M_2(f_1,f_2)]$ is the closed linear span of all $z^m\phi(z)q(z)$,
for $m\geq 0$, while $P[M_2(h)]=H_u^2(\mathbb{D})$. Thus by the
definition of $f_1$ from $q$, it is obtained
$\phi(z)q(z)H^2(\mathbb{D})=H_u^2(\mathbb{D})$. In view of the
equality $H_u^2(\mathbb{D})=\phi(z)H^2(\mathbb{D})$, we have
$qH^2(\mathbb{D})=H^2(\mathbb{D})$, i.e., $q$ is outer in
$H^2(\mathbb{D})$. This is contradiction and so $M$ can not be of
the form $M=hH^2(\mathbb{D}^2)$ for any $h\in H_{\tilde
u}^2(\mathbb{D}^2)$.
\end{proof}

As a consequence of this theorem, we have that not all invariant
subspaces of $H_{\tilde u}^2(\mathbb{D}^2)$ are Beurling-type. Then
it is natural to ask the structure of Beurling type invariant
subspaces of $H_{\tilde u}^2(\mathbb{D}^2)$.\\

First of all, we need to recall the class of vector-valued analytic
functions. Let $K$ be a Hilbert space with inner product
$\langle\cdot,\cdot\rangle$ and norm $||\cdot||_K$. Then by $H^2(K)$
we mean the space of all $K$-valued holomorphic functions
$f(z)=\sum_{n=0}^\infty a_nz^n$ on $\mathbb{D}$ for which the
quantity

 \be
\displaystyle{\frac{1}{2\pi}\int_{0}^{2\pi}\|f(re^{it})\|^2_{K}dt}=\sum_{n=0}^\infty
\|a_n\|^2_K r^{2n} \ee remains bounded for $0\leq r<1$. Clearly,
$H^2(K)$ is a Hilbert space under the inner product
\[\langle f, g\rangle_2=\lim_{r\rightarrow 1}\displaystyle{\frac{1}{2\pi}}\int_{0}^{2\pi}\langle f(re^{i\theta}),g(re^{i\theta})\rangle_K d\theta=\sum_{n=0}^\infty\langle a_n,b_n\rangle_K\]
for any $f(z)=\sum_{n=0}^\infty a_nz^n$ and $ g(z)=\sum_{n=0}^\infty
b_nz^n$ in the space. Now if $K$ is a reflexive Banach space then it
has Fatou Property i.e. each $f\in H^{1}(K)$ has non-tangential
limits on $\partial K$ (\cite[pg:38, 48]{Aytuna}). Hence, we know
that each $f\in H^2(K)$ has the radial limit $f^{*}$ as a Bochner
measurable function and $f^{*}\in L^2_+(K)$, where $L^2_+(K)$ is the
space of $L^2(K)$ functions whose negative Fourier coefficients are
0, and we also have $\|f\|_{H^2(K)}=\|f^{*}\|_{L^2_+(K)}$ (For
details see \cite[pg:183-186]{Nagy}).

On the other hand, if $B(K,K_1)$ denotes the algebra of all the
bounded linear operators from $K$ to $K_1$, then by
$H^\infty(B(K,K_1))$ we mean the algebra of bounded $B(K,
K_1)$-valued holomorphic functions $\Theta$ on $\mathbb{D}$ in the
norm
$\|\Theta\|_\infty=\sup_{z\in\mathbb{D}}\norm{\Theta(z)}_{B(K,K_1)}<\infty$.
It is obvious that each $\Theta\in H^\infty(B(K,K_1))$ gives rise to
a bounded linear operator from $H^2(K)$ into $H^2(K_1)$ namely, to
an element $\Theta$, we correspond an operator
$\hat{\Theta}:H^2(K)\rightarrow H^2(K_1)$ that is defined by the
formula
\[(\hat{\Theta}f)(z)=\Theta(z)f(z),\, z\in \mathbb{D},\, f\in H^2(K).\]

An operator-valued  $\Theta\in H^\infty(B(K,K_1)) $ is called inner
if $\Theta(e^{it})$ is an isometry from $K$ into $K_1$ for almost
every $t$ or equivalently, the operator $\hat{\Theta}$ is an
isometry.

The reader can find the details of vector-valued analytic functions
in \cite{Radjavi, Nagy, Rosenblum}.\\

Analogously, we are going to define the vector valued
Poletsky-Stessin Hardy spaces as follows:

\begin{defn}
Let $K$ be a Hilbert space, $u$ be a continuous, subharmonic
exhaustion function for $\mathbb{D}$. Then the vector valued
Poletsky-Stessin Hardy space is defined as follows: \be
H^{2}_{u}(K)=\{f:\mathbb{D}\rightarrow K, \textit{holomorphic}:
\sup_{r<0}\displaystyle{\int_{S_{u}(r)}\|f(z)\|^{2}_{K}d\mu_{u,r}(z)}<\infty
\} \ee
\end{defn}
Following step by step the same arguments from the scalar valued
case one can easily see that $H^{2}_{u}(K)\subset H^2(K)$. Thus, we
automatically inherit the radial boundary values from the classical
Hardy space $H^2(K)$ and again just rewriting scalar value arguments
we have the following boundary value characterization:
\begin{prop}
Let $f\in H^{2}_{u}(K)$ and $f^*$ be its radial boundary value
function. Then \be
\|f\|^{2}_{H^{2}_{u}(K)}=\|f^{*}\|^{2}_{L^2_{+,u}(K)}=\int_{\mathbb{T}}\|f^{*}(\xi)\|^2_{K}d\mu_{u}(\xi)
\ee
\end{prop}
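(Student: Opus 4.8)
The plan is to transfer the scalar Poletsky--Stessin boundary-value characterization of $H^2_u(\mathbb{D})$ to the vector-valued setting by exploiting the subharmonicity of the scalar function $z\mapsto\|f(z)\|_K^2$. Since $f\in H^2_u(K)\subset H^2(K)$, the radial limit $f^*$ already exists as a Bochner measurable $K$-valued function with $f^*\in L^2_+(K)$, so the two right-hand quantities are well defined; what remains is to identify all three expressions, and the engine for this is the same Demailly machinery used in the scalar proof.

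First I would verify that $v(z):=\|f(z)\|_K^2$ is a nonnegative subharmonic function on $\mathbb{D}$. This follows from the vector-valued mean value property $f(a)=\frac{1}{2\pi}\int_0^{2\pi}f(a+\rho e^{i\theta})\,d\theta$, valid on every closed disc contained in $\mathbb{D}$: combining it with the triangle inequality for the Bochner integral and the convexity of $t\mapsto t^2$ (equivalently Cauchy--Schwarz) yields the sub-mean-value inequality $\|f(a)\|_K^2\le\frac{1}{2\pi}\int_0^{2\pi}\|f(a+\rho e^{i\theta})\|_K^2\,d\theta$. Alternatively, choosing an orthonormal basis $\{e_k\}$ of the separable subspace $\overline{\operatorname{span}}\{a_n\}$ spanned by the Taylor coefficients of $f$ (which contains all values of $f$ and, a.e., of $f^*$), one writes $v=\sum_k|f_k|^2$ with $f_k:=\langle f,e_k\rangle$ scalar and holomorphic, a locally uniformly convergent sum of subharmonic functions.

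Next, because $v\ge 0$ is subharmonic, Demailly's Lelong--Jensen formula applies to $v$ exactly as it does to $|g|^2$ in the scalar argument: the map $r\mapsto\int_{S_u(r)}v\,d\mu_{u,r}$ is nondecreasing in $r$ and converges, as $r\to 0^-$, to $\int_{\mathbb{T}}v^*\,d\mu_u$, where $v^*$ is the radial boundary value of $v$. Monotonicity gives $\sup_{r<0}=\lim_{r\to 0^-}$, so the left-hand quantity is the genuine $H^2_u(K)$-norm. Since $f(r\xi)\to f^*(\xi)$ in $K$ for a.e. $\xi$, continuity of $\|\cdot\|_K$ forces $v^*=\|f^*\|_K^2$ a.e., whence $\int_{\mathbb{T}}v^*\,d\mu_u=\int_{\mathbb{T}}\|f^*\|_K^2\,d\mu_u=\|f^*\|^2_{L^2_{+,u}(K)}$, chaining the three equalities.

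The only genuinely new ingredient over the scalar case is the subharmonicity of $\|f\|_K^2$; once it is in hand, every subsequent step is the scalar Poletsky--Stessin argument read verbatim. I expect the main point requiring care to be the identification of the radial boundary value of $v$ with $\|f^*\|_K^2$ and the fact that the Lelong--Jensen limit is taken against the boundary measure $\mu_u$ rather than against Lebesgue measure; both are controlled by the weak$^*$ convergence $\mu_{u,r}\to\mu_u$ recorded in the Preliminaries and by Bochner measurability of $f^*$. An alternative that sidesteps subharmonicity entirely is the orthonormal-basis reduction: apply the scalar identity $\|f_k\|^2_{H^2_u}=\int_{\mathbb{T}}|f_k^*|^2\,d\mu_u$ term by term and interchange the summation with both $\lim_{r\to 0^-}$ and the integral by monotone convergence (a double-supremum swap, legitimate since each $r\mapsto\int_{S_u(r)}|f_k|^2\,d\mu_{u,r}$ is nonnegative and nondecreasing), summing back to $\int_{\mathbb{T}}\|f^*\|_K^2\,d\mu_u$.
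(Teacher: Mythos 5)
Your proposal is correct and follows essentially the same route as the paper, whose entire proof is the remark that the statement ``directly follows from the scalar valued argument'' of \c{S}ahin's Theorem 2.2.1: you simply execute that transfer explicitly, with the subharmonicity of $z\mapsto\|f(z)\|_K^2$ as the one genuinely new ingredient, which is exactly what makes the scalar Demailly--Lelong--Jensen argument go through verbatim. Your alternative orthonormal-basis reduction is a nice additional check, but it is not needed and is not what the paper does.
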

\begin{proof}
Directly follows from the scalar valued argument given in
(\cite{Sahin}, Theorem 2.2.1).
\end{proof}

Now, recall the Wold decomposition for isometries \cite[p.3, Theorem
1.1]{Nagy}: Let $V$ be an arbitrary isometry on a Hilbert space $H$.
Then $H$ decomposes into an orthogonal sum $H=H_1\oplus H_2$ such
that $H_1$ and $H_2$ reduce $V$, the part of $V$ on $H_1$ is unitary
and the part of $V$ on $H_2$ is a unilateral shift. This
decomposition is uniquely determined, indeed we have
\[H_1=\bigcap_{n=0}^\infty V^n H \mbox{ and } H_2=\bigoplus_{n=0}^\infty V^n E \mbox{ where } E=H\ominus VH.\]
The space $H_1$ or $H_2$ may be absent, i.e., equal to $\{0\}$.\\

If the Poletsky-Stessin Hardy space over the  bidisc is interpreted
as the vector-valued analytic functions on the unit disc of complex
plane, then invariant subspaces under the multiplication
operator by the independent variable $z$ are described in terms of Lax-Halmos theorem.\\

\begin{thm}\label{lax-halmos}
Let $M$ be a non-zero subspace of $H_u^2(H_u^2(\mathbb{D}))$.
$M$ is invariant under the multiplication operator by the
independent variable $z$ if and only if there exists a Hilbert space $E$ and
an inner function $\Theta\in H^\infty(B(E, \varphi
H^2_u(\mathbb{D})))$ such that $M=\hat\Theta H^2(E)$. This class of
the functions $\Theta$ is denoted by $\{\Theta_M\}$.
\end{thm}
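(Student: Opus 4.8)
The plan is to run a Wold-decomposition argument directly on the outer space, writing $K=H^2_u(\mathbb D)$ so that $H^2_u(H^2_u(\mathbb D))=H^2_u(K)$. The first observation is that the shift $S\colon f\mapsto zf$ is an isometry of $H^2_u(K)$: by the boundary value characterization its squared norm is $\int_{\mathbb T}\|f^*\|_K^2\,d\mu_u$ and $|z|=1$ on $\mathbb T$, so $S$ preserves it. The sufficiency direction is then immediate, since if $\Theta$ is inner and $\hat\Theta\colon H^2(E)\to H^2_u(K)$ is an isometry, then $\hat\Theta H^2(E)$ is closed and $z\hat\Theta g=\hat\Theta(zg)\in\hat\Theta H^2(E)$, so $M=\hat\Theta H^2(E)$ is a nonzero invariant subspace.

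For necessity I would set $V=S|_M$, an isometry on the Hilbert space $M$, and apply the Wold decomposition recalled above: $M=M_1\oplus M_2$ with $M_1=\bigcap_{n\ge0}V^nM$ and $M_2=\bigoplus_{n\ge0}V^nW_0$, $W_0=M\ominus zM$. The key point is that the unitary part is absent. Since $M\subset H^2_u(K)\subset H^2(K)$, every element of $V^nM=z^nM$ is a $K$-valued holomorphic function whose first $n$ Taylor coefficients vanish, whence $M_1\subset\bigcap_n z^nH^2(K)=\{0\}$. Therefore $M=\bigoplus_{n\ge0}z^nW_0$, the sum being orthogonal in the $H^2_u(K)$ inner product, and $W_0$ is the wandering subspace from which $\Theta$ will be built.

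Next I would fix a Hilbert space $E$ with $\dim E=\dim W_0$ and a unitary $\tau\colon E\to W_0$, and define $\Theta(z)e=(\tau e)(z)\in K$ for $e\in E$, $z\in\mathbb D$. The heart of the proof is the boundary computation. For an orthonormal basis $\{e_\alpha\}$ of $E$ the Wold orthogonality $\langle z^m\tau e_\alpha,z^n\tau e_\gamma\rangle_{H^2_u(K)}=\delta_{mn}\delta_{\alpha\gamma}$ says exactly that every Fourier coefficient of $\theta\mapsto\langle\Theta(e^{i\theta})e_\alpha,\Theta(e^{i\theta})e_\gamma\rangle_K\,\beta(\theta)$ vanishes except the zeroth, which equals $\delta_{\alpha\gamma}$, so that
\[
\langle\Theta(e^{i\theta})e_\alpha,\Theta(e^{i\theta})e_\gamma\rangle_K\,\beta(\theta)=\delta_{\alpha\gamma}\qquad\text{a.e.}
\]
Thus $\sqrt\beta\,\Theta(e^{i\theta})$ is an isometry of $E$ into $K$ almost everywhere; this is the operator analogue of the scalar condition $|\phi^*|^2\beta=1$ and is exactly what makes $\hat\Theta\colon H^2(E)\to H^2_u(K)$ isometric with range $\overline{\operatorname{span}}\{z^n\tau e_\alpha\}=M$. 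Writing $\Theta=\phi\Psi$ with $\phi$ the non-vanishing scalar $u$-inner function of the Remark turns $\Psi$ into a genuine isometry into $K=\phi H^2(\mathbb D)=H^2_u(\mathbb D)$, which is the form recorded as $\varphi H^2_u(\mathbb D)$ in the statement; boundedness, $\|\Theta(e^{i\theta})\|^2_{B(E,K)}=1/\beta(\theta)\le 1/c$, follows from $\beta\ge c>0$, so $\Theta\in H^\infty(B(E,K))$.

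The step I expect to be the main obstacle is this boundary verification: one must first make rigorous that the pointwise prescription $\Theta(z)e=(\tau e)(z)$ defines a single bounded holomorphic $B(E,K)$-valued function with well-defined non-tangential boundary values (using the Fatou/radial-limit theory for $H^2(K)$ recorded in the preliminaries), and then thread the weight $\beta$ correctly through the Fourier-coefficient computation so that the limit measure $d\mu_u=\beta\,d\theta$, rather than $d\theta$, produces the weighted isometry above. Once this is in place everything else is the standard Lax--Halmos bookkeeping; equivalently, one may package the whole argument by noting that multiplication by $\phi$ is a unitary of $H^2(K)$ onto $H^2_u(K)$ intertwining the two shifts, which reduces the statement to the classical vector-valued Lax--Halmos theorem and explains the factor $\phi$ in $\Theta=\phi\Psi$.
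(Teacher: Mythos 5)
Your proposal is correct in substance, and it reaches the same object as the paper, but the key step is executed by a genuinely different (more self-contained) route. Both you and the paper run the identical Wold-decomposition argument: restrict the shift $V=S|_M$, kill the unitary part via $\bigcap_n z^nH^2(K)=\{0\}$, and reduce to the wandering subspace $W_0=M\ominus zM$. From there the paper proceeds by citation: it first proves Lemma \ref{*}, which identifies the ambient space as $H_u^2(H_u^2(\mathbb{D}))=\varphi H^2(H_u^2(\mathbb{D}))=\bigoplus_n z^n(\varphi H^2_u(\mathbb{D}))$, so that the wandering subspace of the big shift is $\varphi H^2_u(\mathbb{D})$, and then invokes the Sz.-Nagy--Foias intertwining lemma (Lemma 3.2, p.~195 of the Nagy--Foias book) applied to the inclusion $Q:M\hookrightarrow H_u^2(H_u^2(\mathbb{D}))$ to produce the inner $\Theta$ abstractly. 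You instead build $\Theta$ by hand, $\Theta(z)e=(\tau e)(z)$, and verify innerness directly by the Fourier-coefficient computation against the weight $\beta$, arriving at the weighted relation $\langle\Theta(e^{i\theta})e_\alpha,\Theta(e^{i\theta})e_\gamma\rangle_K\,\beta(\theta)=\delta_{\alpha\gamma}$ and then factoring out the scalar $u$-inner function; this is essentially an unwinding of the Nagy--Foias lemma in this concrete situation, and it has the merit of exhibiting exactly how the weight $\beta$ enters, at the cost of the technical bookkeeping (operator-valued boundary values, separability, Poisson estimate for $H^\infty$-boundedness) that the paper gets for free from the cited lemma. Your closing ``equivalently'' remark --- that multiplication by $\varphi$ is a unitary of $H^2(K)$ onto $H^2_u(K)$ intertwining the shifts, reducing the theorem to the classical vector-valued Lax--Halmos theorem --- is precisely the content of the paper's Lemma \ref{*}, so the two arguments are equivalent in substance. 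One small bookkeeping caution: the statement's target space $\varphi H^2_u(\mathbb{D})$ is the wandering subspace $\{\varphi(z)k:k\in H^2_u(\mathbb{D})\}$ of the ambient shift (functions of $z$ with constant $K$-part), which is unitarily identified with $K=H^2_u(\mathbb{D})$ via $k\mapsto\varphi(\cdot)k$; it is not the set $\varphi\cdot H^2_u(\mathbb{D})=\varphi^2H^2(\mathbb{D})$ of scalar functions, so your phrase identifying it with $K=\varphi H^2(\mathbb{D})$ conflates the two --- harmless here, since your $\Psi=\Theta/\varphi$ corresponds to the paper's $\Theta$ exactly under that unitary identification, but worth stating precisely.
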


First of all, we need the following lemma:
\begin{lem}\label{*}
$H_u^2(H_u^2(\mathbb{D}))=\varphi H^2(H_u^2(\mathbb{D}))$ where
$\varphi$ is the $u$-inner function which gives
$H_u^2(\mathbb{D})=\varphi H^2(\mathbb{D})$.
\end{lem}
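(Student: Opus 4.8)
The plan is to show that multiplication by the scalar outer function $\varphi=\varphi(z)$ implements an isometric isomorphism of $H^2(K)$ onto $H^2_u(K)$, where $K=H^2_u(\mathbb{D})$. Recall from the Remark that $\varphi$ may be taken non-vanishing on $\mathbb{D}$ (it is the exponential of a Poisson-type integral) and that its boundary values satisfy $\abs{\varphi^*(\xi)}^2\beta(\xi)=1$ a.e. on $\mathbb{T}$. Since $\varphi$ acts only on the outer variable while the $K$-values are carried along unchanged, the two inclusions $\varphi H^2(K)\subseteq H^2_u(K)$ and $H^2_u(K)\subseteq\varphi H^2(K)$ are precisely the vector-valued analogues of the scalar identity $H^2_u(\mathbb{D})=\varphi H^2(\mathbb{D})$ that appears in the statement, and it is this scalar identity that I would leverage throughout.

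For the inclusion $\varphi H^2(K)\subseteq H^2_u(K)$ together with the isometry, I would take $f\in H^2(K)$ with radial boundary value $f^*\in L^2_+(K)$ and compute, using the boundary-value characterization of the preceding Proposition and $\abs{\varphi^*}^2\beta=1$,
\[
\norm{\varphi f}^2_{H^2_u(K)}=\int_{\mathbb{T}}\abs{\varphi^*}^2\norm{f^*}_K^2\,d\mu_u=\int_{\mathbb{T}}\abs{\varphi^*}^2\beta\,\norm{f^*}_K^2\,d\theta=\int_{\mathbb{T}}\norm{f^*}_K^2\,d\theta=\norm{f}^2_{H^2(K)},
\]
the last equality being the defining normalization of $H^2(K)$. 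Hence $\varphi f\in H^2_u(K)$ and $M_\varphi\colon f\mapsto\varphi f$ is norm-preserving, so it remains only to prove that every $g\in H^2_u(K)$ is of the form $\varphi f$ with $f\in H^2(K)$.

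For this surjectivity, let $g\in H^2_u(K)\subseteq H^2(K)$. Since $\varphi$ is non-vanishing, $f:=g/\varphi$ is a well-defined $K$-valued holomorphic function on $\mathbb{D}$, and I would show $f\in H^2(K)$. First I reduce to the scalar case: for each $e\in K$ the scalar function $\langle g,e\rangle$ lies in $H^2_u(\mathbb{D})$ (its boundary norm is dominated by $\norm{e}_K\,\norm{g}_{H^2_u(K)}$), so the scalar identity $H^2_u(\mathbb{D})=\varphi H^2(\mathbb{D})$ gives $\langle f,e\rangle=\langle g,e\rangle/\varphi\in H^2(\mathbb{D})$; thus $f$ is \emph{weakly} in $H^2(K)$. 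The main obstacle is the passage from this weak membership to genuine membership $f\in H^2(K)$, since a weakly-$H^2$ holomorphic function need not a priori have summable coefficient norms. I would resolve it by fixing an orthonormal basis $\{e_\alpha\}$ of $K$, setting $f_\alpha=\langle f,e_\alpha\rangle\in H^2(\mathbb{D})$, and controlling the growth means by Parseval, Tonelli, and the subharmonic mean-value inequality:
\[
\frac{1}{2\pi}\int_0^{2\pi}\norm{f(re^{it})}_K^2\,dt=\sum_\alpha\frac{1}{2\pi}\int_0^{2\pi}\abs{f_\alpha(re^{it})}^2\,dt\le\sum_\alpha\norm{f_\alpha}_{H^2}^2=\frac{1}{2\pi}\int_{\mathbb{T}}\norm{f^*}_K^2\,dt.
\]
Since $f^*=g^*/\varphi^*$ and $1/\abs{\varphi^*}^2=\beta$, the right-hand side equals $\int_{\mathbb{T}}\norm{g^*}_K^2\,\beta\,d\theta=\norm{g}^2_{H^2_u(K)}<\infty$, so the means are uniformly bounded in $r$; hence $f\in H^2(K)$ and $\varphi f=g$. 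Combined with the previous paragraph this yields $H^2_u(K)=\varphi H^2(K)$.

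Finally, I would note that the same conclusion can be packaged more conceptually: under the canonical identifications $H^2(K)\cong H^2(\mathbb{D})\,\hat\otimes\,K$ and $H^2_u(K)\cong H^2_u(\mathbb{D})\,\hat\otimes\,K$, multiplication by $\varphi$ is exactly $M_\varphi\otimes I_K$, and tensoring the scalar isometric isomorphism $M_\varphi\colon H^2(\mathbb{D})\to H^2_u(\mathbb{D})$ with the identity on $K$ gives the lemma at once. I would treat this as the structural reason underlying the explicit estimates above, with the weak-to-strong upgrade being the only genuinely non-formal point to be checked.
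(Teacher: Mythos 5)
Your proposal is correct, and its overall skeleton matches the paper's: both directions reduce to multiplying or dividing by the non-vanishing $u$-inner function $\varphi$ and converting measures via the identity $\abs{\varphi^*}^2\,d\mu_u=d\theta$, together with the boundary-value characterization of the norms. The genuine difference is in the converse inclusion $H_u^2(H_u^2(\mathbb{D}))\subseteq\varphi H^2(H_u^2(\mathbb{D}))$. The paper sets $h=f/\varphi$, notes that $h(z)\in H_u^2(\mathbb{D})$ pointwise, computes the boundary integral $\int_{\mathbb{T}}\norm{h}^2_{H_u^2(\mathbb{D})}\,d\theta=\int_{\mathbb{T}}\norm{f^*}^2_{H_u^2(\mathbb{D})}\,d\mu_u<\infty$, and concludes directly that $h\in H^2(H_u^2(\mathbb{D}))$. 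Taken literally, that last inference (finite $L^2$ boundary integral of the radial limits implies $H^2$ membership) is not valid for an arbitrary holomorphic function --- in the scalar case the reciprocal of a singular inner function has unimodular radial limits a.e.\ yet lies outside $H^2$ --- so the paper is implicitly leaning on $\varphi$ being outer (Smirnov-class considerations), without saying so. This is exactly the point you flag as ``the only genuinely non-formal point'' and then actually prove: you reduce to the scalar components $\langle f,e_\alpha\rangle$, invoke the cited scalar identity $H_u^2(\mathbb{D})=\varphi H^2(\mathbb{D})$ (where the outer-function content is already encapsulated), and upgrade weak membership to genuine membership via Parseval, Tonelli, and monotonicity of the integral means, which gives a uniform bound on $\frac{1}{2\pi}\int_0^{2\pi}\norm{f(re^{it})}_K^2\,dt$. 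So your converse is longer than the paper's one-line computation but strictly more rigorous; the forward (isometry) direction is essentially identical in both, and your closing tensor-product remark, identifying the map with $M_\varphi\otimes I_K$ on $H^2(\mathbb{D})\,\hat\otimes\,K$, is a clean structural summary that the paper does not give.
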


\begin{proof}
Let $f\in\varphi H^2(H_u^2(\mathbb{D}))$. Then $f(z)=\varphi(z)h(z)$
where $h(z)\in H_u^2(\mathbb{D})$. Now
\begin{equation*}
\begin{split}
\int_{\mathbb{T}}||f(z)||^2_{H_u^2(\mathbb{D})}d\mu_u(z)&=\int_{\mathbb{T}}|\varphi(z)|^2||h(z)||^2_{H_u^2(\mathbb{D})}d\mu_u(z)\\
 &=\int_{\mathbb{T}}||h(z)||^2_{H_u^2(\mathbb{D})}d\theta=||h||_{H^2(H_u^2(\mathbb{D}))}<\infty.
\end{split}
\end{equation*}

Then $f\in H^2(H_u^2(\mathbb{D}))$ and
$H_u^2(H_u^2(\mathbb{D}))\supseteq\varphi H^2(H_u^2(\mathbb{D}))$.
Conversely, let $f\in H_u^2(H_u^2(\mathbb{D}))$. Then consider the
function $\displaystyle{\frac{f(z)}{\varphi(z)}}$. We want to show
that $\displaystyle{\frac{f(z)}{\varphi(z)}=h(z)}$ is in
$H^2(H_u^2(\mathbb{D}))$. First of all for all $z\in\mathbb{D}$
$h(z)=\displaystyle{\frac{f(z)}{\varphi(z)}=\frac{h_z}{\varphi(z)}}\in
H_u^2(\mathbb{D})$ since $h_z\in H_u^2(\mathbb{D})$ and
\begin{equation*}
\begin{split}
\int_{\mathbb{T}}||h(z)||^2_{H_u^2(\mathbb{D})}d\theta & = \int_{\mathbb{T}}\frac{1}{|\varphi(z)|^2}\int_{\mathbb{D}}|h_z(w)|^2d\mu_u(w)d\theta\\
 &=\int_{\mathbb{D}}\int_{\mathbb{D}}|h_z(w)|^2d\mu_u(w)d\mu_u(z)\\
 &=\int_{\mathbb{T}}||f(z)||^2_{H_u^2(\mathbb{D})}d\mu_u(z)=||f||<\infty.
\end{split}
\end{equation*}
since $|\varphi|^2\beta=1$ a.e. we have $|\varphi|^2d\mu_u=d\theta$.
Hence we obtain $H_u^2(H_u^2(\mathbb{D}))\subseteq\varphi
H^2(H_u^2(\mathbb{D}))$.
\end{proof}

\begin{proof}[Proof of Theorem \eqref{lax-halmos}]
If $\Theta\in H^\infty(B(E, \varphi
H^2_u(\mathbb{D})))$ is an inner function then the corresponding operator is isometric and hence $M=\hat\Theta H^2(E)$ is closed. Its invariance for the multiplication by $z$ is obvious.\\
Now let $M$ be an invariant subspace of $H_u^2(H_u^2(\mathbb{D}))$ under
multiplication by $z$. Now first of all embedding
$H_u^2(\mathbb{D})$ in $H^2(H_u^2(\mathbb{D}))$ as a subspace by
identifying the element $\lambda\in H_u^2(\mathbb{D})$ with the
constant function $\lambda(z)=\lambda$; $H^2_u(\mathbb{D})$ is then
wandering for the multiplication operator by $z$ and
\[H^2(H^2_u(\mathbb{D}))=\bigoplus_{n=0}^\infty z^nH^2_u(\mathbb{D})\]
and by Lemma \eqref{*} we have
\[H_u^2(H^2_u(\mathbb{D}))=\varphi H^2(H_u^2(\mathbb{D}))=\bigoplus_{n=0}^\infty z^n(\varphi H^2_u(\mathbb{D})).\]
Let $V$ denote the restriction of the multiplication operator by $z$
to the invariant subspace $M$; this is an isometry on $M$. We have
\[\bigcap_{n=0}^\infty V^nM\subset \bigcap_{n=0}^\infty z^nH_u^2(H^2_u(\mathbb{D}))\subset \bigcap_{n=0}^\infty z^nH^2(H^2_u(\mathbb{D}))=\{0\}\]
and thus $V$ has no unitary part so that the corresponding
Wold-decomposition of the form $M=\bigoplus_{n=0}^\infty V^n E$,
where $E=M\ominus (VM)$. Let us now apply Lemma 3.2
\cite[p.195]{Nagy} to $R_+=M$, $U_+=V$, $U=E$,
$R_+'=H_u^2(H_u^2(\mathbb{D}))$, $U_+=\mbox{multiplication by $z$}$,
$U'=\varphi H^2_u(\mathbb{D})$ and $Q=\mbox{the identical
transformation of $M$ into $H_u^2(H_u^2(\mathbb{D}))$ }$, then there
exists an inner function $\Theta\in H^\infty(B(E, \varphi
H^2_u(\mathbb{D})))$ such that
\begin{equation}\label{**}
\phi_+^{\varphi H_u^2(\mathbb{D})}Q=\hat{\Theta}\phi_+^E
\end{equation}
on $M$. Since $\varphi H^2_u (\mathbb{D})$ consists of the constant
functions in $H_u^2(H_u^2(\mathbb{D}))$, the Fourier representation
of $H_u^2(H_u^2(\mathbb{D}))$  with respect to multiplication by $z$
is identity transformation. On the other hand we have $Qh=h$ for
$h\in M$. Thus \eqref{**} reduces to $h=\hat{\Theta}\phi_+^Eh$,
$h\in M$ and hence we have $M=\hat{\Theta}\phi_+^E M= \hat{\Theta}
H^2(E)$ as claimed.
\end{proof}

\begin{lem}\label{vector-valued}
Vector valued Poletsky-Stessin Hardy space
$H_u^2(H_u^2(\mathbb{D}))$ is isometrically isomorphic to the
Poletsky-Stessin Hardy space $H_{\tilde{u}}^2(\mathbb{D}^2)$ of
bidisc.
\end{lem}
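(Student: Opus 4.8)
The plan is to realize the isomorphism through the natural \emph{slicing} map. Define
\[
\Phi : H_{\tilde u}^2(\mathbb{D}^2) \To H_u^2(H_u^2(\mathbb{D})), \qquad (\Phi f)(z) = f(z,\cdot),
\]
which sends a holomorphic function on the bidisc to the $H_u^2(\mathbb{D})$-valued function $z\mapsto f(z,\cdot)$ obtained by freezing the first coordinate. The guiding principle is that both spaces embed isometrically into $L^2$-type spaces over the torus via boundary values, and that the splitting $d\mu_{\tilde u}(\xi,\eta)=d\mu_u(\xi)\,d\mu_u(\eta)$ together with Fubini's theorem will make $\Phi$ norm preserving. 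This is exactly the twisted-measure analogue of the classical isometry $H^2(\mathbb{D}^2)\cong H^2(H^2(\mathbb{D}))$, under which $f(z,w)=\sum_{m,n}a_{mn}z^mw^n$ corresponds to $z\mapsto\sum_m z^m g_m$ with $g_m(w)=\sum_n a_{mn}w^n$.

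First I would check that $\Phi$ is well defined. Using the inclusion $H_{\tilde u}^2(\mathbb{D}^2)\subset H^2(\mathbb{D}^2)$ and the classical slicing isometry above, the map $z\mapsto f(z,\cdot)$ is already a holomorphic $H^2(\mathbb{D})$-valued function; joint holomorphy of $f$ and boundedness of point evaluations on $H^2(\mathbb{D})$ give the vector-valued holomorphy directly. It then remains to see that the values in fact lie in $H_u^2(\mathbb{D})$ and that the resulting function belongs to $H_u^2(H_u^2(\mathbb{D}))$, which is where the boundary value characterizations are used. The isometry is the heart of the matter: writing $f^*$ for the boundary function of $f$ on $\mathbb{T}^2$, the vector-valued boundary characterization gives
\[
\|\Phi f\|^2_{H_u^2(H_u^2(\mathbb{D}))}=\int_{\mathbb{T}}\|(\Phi f)^*(\xi)\|^2_{H_u^2(\mathbb{D})}\,d\mu_u(\xi)=\int_{\mathbb{T}}\int_{\mathbb{T}}|f^*(\xi,\eta)|^2\,d\mu_u(\eta)\,d\mu_u(\xi),
\]
while the scalar characterization gives $\|f\|^2_{H_{\tilde u}^2(\mathbb{D}^2)}=\int_{\mathbb{T}^2}|f^*|^2\,d\mu_{\tilde u}=\int_{\mathbb{T}}\int_{\mathbb{T}}|f^*(\xi,\eta)|^2\,d\mu_u(\xi)\,d\mu_u(\eta)$, and the two agree by Fubini.

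The one identity that must be justified for this to be legitimate is
\[
(\Phi f)^*(\xi)=f^*(\xi,\cdot)\quad\text{in } H_u^2(\mathbb{D}),\ \text{for a.e. } \xi\in\mathbb{T},
\]
that is, the radial boundary value in $z$ of the vector-valued function must agree with the corresponding slice of the joint boundary function of $f$ on $\mathbb{T}^2$. I expect this interchange of boundary passages — first in $w$, then in $z$, versus the joint limit on $\mathbb{T}^2$ — to be the main obstacle, since it is the standard delicate point of polydisc Hardy theory; here it should be tractable precisely because $\tilde u$ is of product type, so that $\mu_{\tilde u}=\mu_u\otimes\mu_u$ decouples the boundary behaviour coordinatewise and lets the one-variable boundary theory be applied in each slot.

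Finally, surjectivity (and hence that $\Phi$ is an isomorphism, not merely an isometry) I would obtain by exhibiting the inverse explicitly: given $F\in H_u^2(H_u^2(\mathbb{D}))$, set $f(z,w)=(F(z))(w)$. Joint holomorphy of $f$ follows as above, the inclusions $H_u^2(H_u^2(\mathbb{D}))\subset H^2(H^2(\mathbb{D}))\cong H^2(\mathbb{D}^2)$ guarantee $f\in H^2(\mathbb{D}^2)$ so that $f^*$ exists, and the same Fubini computation run backwards shows $f\in H_{\tilde u}^2(\mathbb{D}^2)$ with $\Phi f=F$. Since $\Phi$ is then a norm-preserving linear bijection, it is the desired isometric isomorphism.
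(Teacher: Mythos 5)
Your proposal is correct and is essentially the paper's own argument: the paper simply works with the inverse of your map $\Phi$, sending $g\in H_u^2(H_u^2(\mathbb{D}))$ to $\bar g(z,w)=g_z(w)$, and establishes the isometry by the identical computation --- the product structure $d\mu_{\tilde u}(\xi,\eta)=d\mu_u(\xi)\,d\mu_u(\eta)$, Fubini, and the scalar and vector-valued boundary-value characterizations of the norms. If anything you are more explicit than the paper, which passes silently over exactly the points you flag, namely the boundary-value identification $(\Phi f)^*(\xi)=f^*(\xi,\cdot)$ a.e.\ and the holomorphy/surjectivity checks.
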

\begin{proof}
Let $\tilde{u}(z,w)=\max\{u(z),u(w)\}$ be the exhaustion function for the bidisc $\mathbb{D}^{2}$ then we have the following isometric isomorphism between the Banach spaces $H_u^2(H_u^2(\mathbb{D}))$ and $H_{\tilde{u}}^2(\mathbb{D}^2)$:\\
Take $g\in H_u^2(H_u^2(\mathbb{D}))$ then $g(z)=g_{z}(w)$ for some
$g_{z}\in H^{2}_{u}(\mathbb{D})$. Now consider the corresponding
function $\bar{g}$ on $\mathbb{D}^{2}$ defined as
$\bar{g}(z,w)=g_{z}(w)$ then using (\cite{Sahin}, Theorem 3.2.1) we
have,
\begin{equation*}
\begin{split}
\|\bar{g}\|^{2}_{H_{\tilde{u}}^2(\mathbb{D}^2)}&=\int_{\mathbb{T}^{2}}|\bar{g}^{*}(\xi,\eta)|^{2}d\mu_{\tilde{u}}(\xi,\eta)\\
 &=\int_{\mathbb{T}}\int_{\mathbb{T}}|\bar{g}^{*}(\xi,\eta)|^{2}d\mu_{u}(\eta)d\mu_{u}(\xi)\\
 &=\int_{\mathbb{T}}\|g_{z}\|_{H^{2}_{u}(\mathbb{D})}^{2}d\mu_{u}=\|g\|^{2}_{H_u^2(H_u^2(\mathbb{D}))}.
\end{split}
\end{equation*}

\end{proof}

Suppose that a subspace $M$ of $H_{\tilde u}^2(\mathbb{D}^2)$ which
is invariant under the multiplication operators by independent
variables $z$ and $w$ is of Beurling-type, i.e., $M$ is of the form
$M=\phi H^2(\mathbb{D}^2)$ for some $u$-inner function $\phi$. Since
$M$ is invariant under the multiplication by $z$, in view of Lemma
\eqref{vector-valued} and Theorem \eqref{lax-halmos}, it can be
described by the class of functions $\{\Theta_M\}$ . However, the
subspaces determined by these class of functions  $\{\Theta_M\}$ are
not generally of Beurling-type and the following theorem gives a
condition for those subspaces which are defined by $\{\Theta_M\}$ to
be of Beurling-type using the simple relation
$H^2(H_u^2(\mathbb{D}))= H^2(N)\oplus H^2(N^\perp)$, where $N$ is a
subspace of $H_u^2(\mathbb{D})$ and $N^\perp$ its complement.

\begin{thm}\label{main}
A subspace $M$ of $H_{\tilde u}^2(\mathbb{D}^2)$ is invariant under
the multiplication operators by the independent variables $z$ and
$w$ is Beurling-type if and only if there exists at least an operator valued
holomorphic function $\Theta(z)$, $z\in\mathbb{D}$ in the class
$\{\Theta_M\}$ such that for every $z_0\in \mathbb{D}$ the operator
$\Theta(z_0)$ on $H_u^2(\mathbb{D})$ commutes with the
multiplication operator by $w$ in $H_u^2(\mathbb{D})$.
\end{thm}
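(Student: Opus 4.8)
The statement characterizes when a $z,w$-invariant subspace $M$ of $H_{\tilde u}^2(\mathbb{D}^2)$, viewed through the vector-valued picture as $\hat\Theta H^2(E)$ via Theorem \eqref{lax-halmos}, is actually of Beurling-type $\phi H^2(\mathbb{D}^2)$. The key reduction is the identification (Lemma \eqref{vector-valued}) of $H_{\tilde u}^2(\mathbb{D}^2)$ with $H_u^2(H_u^2(\mathbb{D}))$: under this isometry, multiplication by $z$ on the bidisc becomes multiplication by the scalar variable $z$ on the vector-valued space, while multiplication by $w$ on the bidisc becomes the \emph{pointwise} operator $(W f)(z) = w\cdot f(z)$, i.e.\ application at each $z$ of the multiplication-by-$w$ operator $S$ on the fiber $H_u^2(\mathbb{D})$. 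The plan is to translate the Beurling condition entirely into this language and then unwind what invariance under $W$ forces on the representing function $\Theta$.

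I would proceed in two directions. For the forward implication, suppose $M = \phi H^2(\mathbb{D}^2)$ with $\phi$ a $\tilde u$-inner function. I would argue that since $\phi$ is $\tilde u$-inner it factors compatibly with the product structure so that the associated vector-valued representation $\hat\Theta$ (with $E$ one-dimensional, $E\cong\mathbb{C}$) is given fiberwise by multiplication: $\Theta(z)$ sends a scalar to the function $w\mapsto \phi(z,w)$ in $\varphi H_u^2(\mathbb{D})$. Because $\phi(z,\cdot)$ lies in the multiplier structure, the operator $\Theta(z_0)$, being itself realized through multiplication in the $w$-variable, commutes with $S$ (multiplication by $w$) on $H_u^2(\mathbb{D})$; this is a direct computation $\Theta(z_0)(S\xi) = S(\Theta(z_0)\xi)$ using associativity of pointwise multiplication. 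For the converse, assume some $\Theta \in \{\Theta_M\}$ satisfies $\Theta(z_0)S = S\Theta(z_0)$ for every $z_0 \in \mathbb{D}$. The goal is to produce a single $\tilde u$-inner $\phi$ with $M = \phi H^2(\mathbb{D}^2)$. The commutation says each $\Theta(z_0)$ is an operator on $H_u^2(\mathbb{D})=\varphi H^2(\mathbb{D})$ that commutes with the shift $S$; invoking the scalar commutant-of-the-shift theory (every bounded operator commuting with $S$ is itself multiplication by an $H^\infty$ function, in the $\varphi$-twisted model provided by the Remark and Theorem~1.3), I would conclude $\Theta(z_0)$ is multiplication by some scalar-valued bounded holomorphic symbol $\psi(z_0,\cdot)$. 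Letting $z_0$ vary and checking joint holomorphy gives a function $\phi(z,w)=\psi(z,w)$ holomorphic on $\mathbb{D}^2$; the inner property of $\Theta$ then forces $|\phi^*(\xi,\eta)|^2\beta(\xi)\beta(\eta)=1$ a.e., i.e.\ $\phi$ is $\tilde u$-inner, and $\hat\Theta H^2(E) = \phi H^2(\mathbb{D}^2)$.

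The structural backbone is that commutation with $S$ at every fiber is exactly what decouples the operator-valued $\Theta$ into a scalar multiplier in both variables, which is the algebraic content of Beurling-type. I would organize the converse around the decomposition $H^2(H_u^2(\mathbb{D})) = H^2(N)\oplus H^2(N^\perp)$ flagged before the theorem: the commutant condition guarantees $\Theta$ respects a rank-one (scalar) reduction, forcing $E$ to collapse to a one-dimensional space so that $\hat\Theta$ is multiplication by a single function rather than a genuinely operator-valued inner factor.

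The main obstacle, which I would expect to absorb most of the technical work, is the converse passage from \emph{fiberwise} commutation to a \emph{single} $\tilde u$-inner symbol $\phi$ holomorphic on $\mathbb{D}^2$. Two difficulties compound here: first, one must ensure that the scalar symbols $\psi(z_0,\cdot)$ extracted fiber-by-fiber from the commutant theorem fit together into a function jointly holomorphic in $(z,w)$ — this requires controlling the $z_0$-dependence, likely by differentiating the relation \eqref{**} or by testing against a dense set of $H^2(E)$ elements and using that $\hat\Theta$ is already holomorphic in $z$. Second, one must verify that $E$ is forced to be one-dimensional; if $\dim E > 1$ the representation cannot reduce to scalar multiplication, so the commutation hypothesis must be shown to be incompatible with $\dim E > 1$ (equivalently, that genuine operator-valued inner factors never commute fiberwise with $S$ unless degenerate). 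Carefully disentangling these two points — the holomorphic gluing and the rank collapse — is where the real content lies, and I would treat them as the two lemmas underpinning the converse.
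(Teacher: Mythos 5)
Your converse starts on exactly the paper's track: Lemma \ref{commutant} converts the fiberwise commutation hypothesis into the statement that each $\Theta(z_0)$ is multiplication by an $H^\infty(\mathbb{D})$ symbol, the symbols assemble into a single holomorphic $\phi$ on $\mathbb{D}^2$, and the inner property of $\Theta$ makes $\phi$ a $\tilde u$-inner function. (For the assembly step, no differentiation of \eqref{**} is needed: the paper simply sets $\phi=\Theta 1$, and $z\mapsto\Theta(z)1$ is automatically an $H_u^2(\mathbb{D})$-valued analytic function, which settles your ``holomorphic gluing'' obstacle in one line.)

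The genuine gap is your second ``underpinning lemma'': the claim that the commutation hypothesis forces $\dim E=1$, together with the forward-direction assertion that a Beurling subspace has $E\cong\mathbb{C}$. Both are false, and a proof organized around them would fail. For $M=\phi H^2(\mathbb{D}^2)$ with $\phi$ $\tilde u$-inner, multiplication by $\phi$ is an isometry of $H^2(\mathbb{D}^2)$ onto $M$ (since $|\phi^*|^2\beta\beta=1$ a.e.), so the wandering subspace of multiplication by $z$ is $E=M\ominus zM=\phi\bigl(H^2(\mathbb{D}^2)\ominus zH^2(\mathbb{D}^2)\bigr)$, i.e. $\phi$ times the functions of $w$ alone --- an infinite-dimensional copy of $H^2(\mathbb{D})$. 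In the Lax--Halmos representation of Theorem \ref{lax-halmos}, the Beurling case thus has $\Theta(z)$ equal to multiplication by $\phi(z,\cdot)$ acting on this infinite-dimensional fiber: ``scalar'' refers to the symbol being scalar-valued, not to the rank of $E$, and a scalar multiplication operator on an infinite-dimensional space is still a perfectly good operator-valued inner function. So the statement you propose to prove (commutation is incompatible with $\dim E>1$) is contradicted by the Beurling case itself. What actually closes the converse in the paper is a different, and necessary, step that your outline omits: having produced the $\tilde u$-inner $\phi$, one shows that $\hat\Theta$ and the operator of multiplication by $\phi$ agree on the vectors $z^kw^l/\bigl(\sqrt{A(z)}\sqrt{A(w)}\bigr)$, $k,l\ge 0$, which are dense in $H_{\tilde u}^2(\mathbb{D}^2)$ by Lemma \ref{dilation bidisc}; density then yields $M=\hat\Theta H^2(E)=\phi H^2(\mathbb{D}^2)$. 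That identification-by-density argument, not any dimension count on $E$, is where the converse is finished.
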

Before starting the proof of the main theorem, we need the following preliminary results (\cite{Shresta}, pp:34):

Define
$$
\tilde{\alpha}(z)=\int_{\mathbb{T}}\frac{e^{i\theta}+z}{e^{i\theta}-z}\log(\beta(e^{i\theta}))d\theta
$$ then the function
$$
A(z)=e^{\tilde{\alpha}(z)}
$$ is a holomorphic function of the unit disc which extends smoothly to the unit circle with the property $|A^{*}(e^{i\theta})|=\beta(e^{i\theta})$.

\begin{thm}[Shresta'15]
The space $H^{p}_{u}(\mathbb{D})$ is isometrically isomorphic to $H^{p}(\mathbb{D})$.
\end{thm}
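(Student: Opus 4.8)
The plan is to realize the isometric isomorphism explicitly as multiplication by a suitable power of the outer function $A$ recorded just above the statement. The essential observation is that, by the boundary value characterization (\cite{Sahin}, Theorem 2.2.1), the $H^p_u$-norm is nothing but a $\beta$-weighted $H^p$-norm: for $f\in H^p_u(\mathbb{D})$ with radial boundary function $f^*$,
\[
\|f\|^p_{H^p_u}=\int_{\mathbb{T}}|f^*(e^{i\theta})|^p\,d\mu_u(e^{i\theta})=\int_{\mathbb{T}}|f^*(e^{i\theta})|^p\beta(\theta)\,d\theta,
\]
where the last equality uses $d\mu_u=\beta(\theta)\,d\theta$ from \eqref{rad-nyk der}. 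Thus the only discrepancy between $H^p_u(\mathbb{D})$ and $H^p(\mathbb{D})$ is the weight $\beta$, and the goal is to absorb $\beta$ into a holomorphic multiplier.

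First I would define the holomorphic $p$-th root $A^{1/p}:=e^{\tilde\alpha/p}$. Since $\tilde\alpha$ is holomorphic on $\mathbb{D}$, $A^{1/p}$ is a nonvanishing holomorphic function, and from the stated property $|A^*|=\beta$ one gets $|(A^{1/p})^*|=|A^*|^{1/p}=\beta^{1/p}$ a.e. Because $\beta$ is continuous and strictly positive on the compact circle (so $0<c\le\beta\le C<\infty$, the upper bound being exactly the smoothness of $A$ on $\mathbb{T}$ noted above), both $A^{1/p}$ and $A^{-1/p}=e^{-\tilde\alpha/p}$ are bounded holomorphic functions on $\mathbb{D}$.

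Next I would introduce $\Phi:H^p_u(\mathbb{D})\to H^p(\mathbb{D})$, $\Phi(f)=A^{1/p}f$. As $A^{1/p}$ is bounded and holomorphic, $\Phi(f)$ is holomorphic and its boundary function factors as $(\Phi f)^*=(A^{1/p})^*f^*$ a.e.; hence
\[
\|\Phi(f)\|^p_{H^p}=\int_{\mathbb{T}}|(A^{1/p})^*|^p|f^*|^p\,d\theta=\int_{\mathbb{T}}\beta|f^*|^p\,d\theta=\|f\|^p_{H^p_u},
\]
so $\Phi$ is a well-defined linear isometry into $H^p(\mathbb{D})$. The same computation run with $A^{-1/p}$ shows that $\Psi(g)=A^{-1/p}g$ maps $H^p(\mathbb{D})$ isometrically into $H^p_u(\mathbb{D})$, and since $A^{1/p}A^{-1/p}\equiv 1$ we have $\Psi\circ\Phi=\mathrm{id}$ and $\Phi\circ\Psi=\mathrm{id}$. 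Therefore $\Phi$ is a surjective isometry and $H^p_u(\mathbb{D})\cong H^p(\mathbb{D})$.

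The computations are routine; the only points requiring care are (i) justifying that the radial boundary function of the product $A^{1/p}f$ equals the product of the boundary functions a.e.\ (this follows from the boundedness of $A^{1/p}$, since multiplication by a bounded holomorphic function preserves $H^p$ and commutes with taking nontangential limits), and (ii) the uniform upper bound on $\beta$, which is precisely the smoothness of $A$ on $\mathbb{T}$ and is what guarantees $A^{-1/p}$ is bounded, hence that $\Phi$ is onto. I expect (i) to be the main—though entirely standard—technical obstacle, as everything else collapses onto the single weighted-norm identity displayed first.
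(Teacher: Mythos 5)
The paper itself contains no proof of this theorem: it is quoted from Shresta's thesis, and the Remark that follows it merely records the correspondence $f\leftrightarrow A^{1/p}f$. Your construction is therefore exactly the intended isomorphism, and the forward half of your argument---that $\Phi(f)=A^{1/p}f$ is a well-defined isometry of $H^p_u(\mathbb{D})$ into $H^p(\mathbb{D})$---is sound, granted facts you may quote ($H^p_u(\mathbb{D})\subset H^p(\mathbb{D})$, the boundary-value norm identity on $H^p_u$, and boundedness of $A$ from its smooth extension to $\mathbb{T}$). Two small slips there: the paper never asserts that $\beta$ is continuous, only that it is a positive $L^1$ function bounded below; and you have the two bounds paired backwards---the upper bound $\beta\le C$ is what makes $A^{1/p}$ bounded, while it is the lower bound $\beta\ge c>0$ that makes $A^{-1/p}=e^{-\tilde\alpha/p}$ bounded, since $\left|A^{-1/p}\right|=\exp\left(-\tfrac{1}{p}\,\mathrm{Re}\,\tilde\alpha\right)\le c^{-1/p}$.

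The genuine gap is in surjectivity. The identity $\|h\|^p_{H^p_u}=\int_{\mathbb{T}}|h^*|^p\,d\mu_u$ is a norm \emph{formula} for functions already known to belong to $H^p_u(\mathbb{D})$; it is not a membership criterion. So ``the same computation run with $A^{-1/p}$'' does not show that $\Psi(g)=A^{-1/p}g$ lands in $H^p_u(\mathbb{D})$ for $g\in H^p(\mathbb{D})$: it only shows that the boundary integral of $A^{-1/p}g$ against $d\mu_u$ is finite, whereas membership requires the defining condition $\limsup_{r\to0^-}\int_{S_u(r)}|A^{-1/p}g|^p\,d\mu_{u,r}<\infty$. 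The naive estimate $\int_{S_u(r)}|A^{-1/p}g|^p\,d\mu_{u,r}\le\|A^{-1/p}\|^p_\infty\int_{S_u(r)}|g|^p\,d\mu_{u,r}$ does not close this, because $g\in H^p$ need not lie in $H^p_u$ (indeed $H^p_u(\mathbb{D})\subsetneq H^p(\mathbb{D})$ whenever $\beta$ is unbounded, which is the typical case). This is exactly the Smirnov-type subtlety of the classical theory: a holomorphic function with boundary values in $L^p$ need not be in the Hardy class unless one also controls its interior growth. Two standard repairs: (a) invoke the converse boundary characterization available in the references (a function of $H^p(\mathbb{D})$, or of the Smirnov class, whose boundary values lie in $L^p(\mu_u)$ belongs to $H^p_u(\mathbb{D})$), applied to $A^{-1/p}g\in H^p(\mathbb{D})$; or (b) avoid it altogether: $\Phi$ is an isometry defined on a Banach space, so its range is closed, and the range contains every polynomial $q$, because $A^{-1/p}q\in H^\infty(\mathbb{D})\subset H^p_u(\mathbb{D})$ (finiteness of the total Monge-Amp\`ere mass, as in the paper's Remark 1.4) and $\Phi(A^{-1/p}q)=q$; since the polynomials are dense in $H^p(\mathbb{D})$ for $p<\infty$, $\Phi$ is onto.
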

\begin{rem}
The above mentioned isomorphism is given as follows:
$$
H^{p}_{u}(\mathbb{D})\leftrightarrow H^{p}(\mathbb{D})
$$
$$
f\leftrightarrow A^{\frac{1}{p}}f
$$
\end{rem}

\begin{lem}\label{dilation bidisc}
The set
$\left\{\frac{z^nw^m}{\sqrt{A(z)}\sqrt{A(w)}}\right\}_{n,m\geq0}$
is an orthonormal basis of $H_{\tilde u}(\mathbb{D}^2)$.
\end{lem}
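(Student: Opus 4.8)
The plan is to deduce the statement from the classical fact that the monomials $\{z^n w^m\}_{n,m\ge 0}$ form an orthonormal basis of $H^2(\mathbb{D}^2)$ (see \cite{Rudin}), by transporting this basis through an explicit isometric isomorphism $H_{\tilde u}^2(\mathbb{D}^2)\cong H^2(\mathbb{D}^2)$. The isomorphism is the bidisc analogue of the Shresta map $f\mapsto A^{1/2}f$, namely multiplication by $\sqrt{A(z)}\sqrt{A(w)}$, and the key observation is simply that a unitary map sends an orthonormal basis to an orthonormal basis; so the preimages $z^n w^m/(\sqrt{A(z)}\sqrt{A(w)})$ of the monomials form the desired basis.

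First I would set up the map. Since $A=e^{\tilde\alpha}$ is zero-free, $\sqrt A:=e^{\tilde\alpha/2}$ is a well-defined holomorphic function on $\mathbb{D}$; because $A$ extends smoothly to $\mathbb{T}$ with $|A^*|=\beta$ bounded above and bounded below away from $0$, both $\sqrt A$ and $1/\sqrt A$ lie in $H^\infty(\mathbb{D})$. Define $\Phi:H_{\tilde u}^2(\mathbb{D}^2)\to H^2(\mathbb{D}^2)$ by $(\Phi F)(z,w)=\sqrt{A(z)}\,\sqrt{A(w)}\,F(z,w)$. Using the boundary relation $|\sqrt{A^*(\xi)}|^2=\beta(\xi)$ together with the measure factorization $d\mu_{\tilde u}(\xi,\eta)=\beta(\xi)\beta(\eta)\,d\xi\,d\eta$ recorded above, one computes
\[\|\Phi F\|_{H^2(\mathbb{D}^2)}^2=\int_{\mathbb{T}^2}\beta(\xi)\beta(\eta)\,|F^*(\xi,\eta)|^2\,d\xi\,d\eta=\|F\|_{H_{\tilde u}^2(\mathbb{D}^2)}^2,\]
so $\Phi$ is isometric; surjectivity follows because $G\mapsto G/(\sqrt{A(z)}\sqrt{A(w)})$, which lands back in $H_{\tilde u}^2(\mathbb{D}^2)$ with finite weighted norm by the same boundedness of $1/\sqrt A$, is a two-sided inverse. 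Hence $\Phi$ is a unitary isomorphism. (Equivalently, one may view $\Phi$ as the Hilbert-space tensor product $T\otimes T$ of the one-variable Shresta isomorphism $T:H_u^2(\mathbb{D})\to H^2(\mathbb{D})$ with itself, after identifying $H_{\tilde u}^2(\mathbb{D}^2)$ with $H_u^2(\mathbb{D})\otimes H_u^2(\mathbb{D})$ via Lemma \ref{vector-valued}.)

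Finally I would invoke that $\{z^n w^m\}_{n,m\ge0}$ is an orthonormal basis of $H^2(\mathbb{D}^2)$ and that unitaries preserve orthonormal bases, concluding that $\Phi^{-1}(z^n w^m)=z^n w^m/(\sqrt{A(z)}\sqrt{A(w)})$ is an orthonormal basis of $H_{\tilde u}^2(\mathbb{D}^2)$; orthonormality can also be verified directly, since on $\mathbb{T}^2$ the weights $|\sqrt{A^*}|^{-2}=\beta^{-1}$ exactly cancel the density $\beta$ of $d\mu_{\tilde u}$, reducing the inner product to the standard Fourier orthogonality $\int_{\mathbb{T}^2}e^{i(n-n')\theta_1}e^{i(m-m')\theta_2}\,d\theta_1\,d\theta_2$. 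I expect the only delicate point to be the surjectivity/invertibility of $\Phi$: one must check that dividing a classical $H^2(\mathbb{D}^2)$ function by $\sqrt{A(z)}\sqrt{A(w)}$ produces a genuine element of the Poletsky--Stessin space $H_{\tilde u}^2(\mathbb{D}^2)$ with finite weighted norm, which is precisely where the two-sided boundedness of $A$ and the factorization of $d\mu_{\tilde u}$ are used; the completeness of the monomials on the bidisc and the boundary-value bookkeeping are otherwise routine.
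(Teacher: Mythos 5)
Your route---transporting the monomial basis of $H^2(\mathbb{D}^2)$ through the multiplication map $\Phi F=\sqrt{A(z)}\sqrt{A(w)}\,F$---is genuinely different from the paper's proof, which never constructs a two-variable isomorphism: the paper verifies orthonormality by exactly the weight-cancellation computation you describe, and then proves \emph{completeness} directly by a Fubini/slice argument on $\mathbb{T}^2$, using only the one-variable fact that $\bigl\{z^n/\sqrt{A(z)}\bigr\}_{n\geq 0}$ is an orthonormal basis of $H^2_u(\mathbb{D})$ (which is what the quoted one-variable theorem of Shresta supplies). Your isometry computation for $\Phi$ and your orthonormality verification are fine, and the bounds $\beta\geq c>0$ and $\beta=|A^*|$ bounded above (from the smooth extension of $A$) do justify $\sqrt{A},\,1/\sqrt{A}\in H^\infty(\mathbb{D})$ as you use them.

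The genuine gap is the step you yourself flag: surjectivity of $\Phi$. Membership in $H^2_{\tilde u}(\mathbb{D}^2)$ is \emph{defined} by $\limsup_{r\to 0^-}\int_{S(r)}|f|^2\,d\mu_{\tilde u,r}<\infty$, an integral over level sets against Monge--Amp\`ere measures; the boundary formula $\|f\|^2_{H^2_{\tilde u}}=\int_{\mathbb{T}^2}|f^*|^2\,d\mu_{\tilde u}$ that you invoke is established (via \cite[Theorems 2.2.1 and 3.2.1]{Sahin}) only for functions already known to lie in the space. So checking that $G/\bigl(\sqrt{A(z)}\sqrt{A(w)}\bigr)$ has finite weighted boundary integral does not yet prove it belongs to $H^2_{\tilde u}(\mathbb{D}^2)$; inferences of this type are not free in Hardy theory (classically, the reciprocal of a singular inner function has unimodular boundary values yet lies in no $H^p$). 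What you actually need is the bidisc analogue of Shresta's isomorphism theorem, which the paper neither states nor cites---the quoted result is one-variable only, and the same objection applies to your tensor-product reading, since Lemma \ref{vector-valued} identifies $H^2_{\tilde u}(\mathbb{D}^2)$ with $H^2_u(H^2_u(\mathbb{D}))$, not with $H^2_u(\mathbb{D})\otimes H^2_u(\mathbb{D})$. The gap is repairable: write $G(z,w)=\sum_m g_m(z)w^m$ with $\sum_m\|g_m\|^2_{H^2}=\|G\|^2_{H^2(\mathbb{D}^2)}$, apply the one-variable theorem to each $g_m$ to get $g_m/\sqrt{A}\in H^2_u(\mathbb{D})$ isometrically, and use monotonicity in $r$ of $\int|h|^2\,d\mu_{u,r}$ for subharmonic integrands, together with Lemma \ref{vector-valued}, to bound the level-set integrals of $\bigl|G/(\sqrt{A}\sqrt{A})\bigr|^2$ uniformly by $\|G\|^2_{H^2(\mathbb{D}^2)}$. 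Alternatively, drop $\Phi$ altogether and finish as the paper does: if $f$ is orthogonal to every $z^nw^m/\bigl(\sqrt{A(z)}\sqrt{A(w)}\bigr)$, Fubini plus completeness of $\{w^m/\sqrt{A(w)}\}$ in $H^2_u(\mathbb{D})$ forces the slice integrals in the first variable to vanish $\mu_u$-a.e., and a second application in the other variable gives $f^*=0$ $\mu_{\tilde u}$-a.e., hence $f\equiv 0$.
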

\begin{proof}
It is enough to show that
$\left\{\frac{z^nw^m}{\sqrt{A(z)}\sqrt{A(w)}}\right\}_{n,m\geq0}$
is a complete orthonormal set in $H_{\tilde u}(\mathbb{D}^2)$ by
\cite[Theorem 4.13, p.16]{Conway}. First we will show the orthonormality of this set:
$$
\left\|\frac{z^nw^m}{\sqrt{A(z)}\sqrt{A(w)}}\right\|_{H_{\tilde u}(\mathbb{D}^2)}=\left\langle\frac{z^nw^m}{\sqrt{A(z)}\sqrt{A(w)}},\frac{z^nw^m}{\sqrt{A(z)}\sqrt{A(w)}}\right\rangle
$$
$$
=\int_{\mathbb{T}^{2}}\frac{e^{in\theta_{1}}e^{im\theta_{2}}e^{-in\theta_{1}}e^{-im\theta_{2}}}{|A^{*}(e^{i\theta_{1}})||A^{*}(e^{i\theta_{2}})|}d\mu_{\tilde{u}}
$$
$$
=\left(\int_{\mathbb{T}}\frac{d\mu_{u}(\theta_{1})}{|A^{*}(e^{i\theta_{1}})|}\right)\left(\int_{\mathbb{T}}\frac{d\mu_{u}(\theta_{2})}{|A^{*}(e^{i\theta_{2}})|}\right)=\int_{\mathbb{T}}d\theta_{1}\int_{\mathbb{T}}d\theta_{2}=1
$$
For any $(n_{1},m_{1})$, $(n_{2},m_{2})$ such that $(n_{1},m_{1})\neq(n_{2},m_{2})$,
$$
\left\langle\frac{z^n_{1}w^m_{1}}{\sqrt{A(z)}\sqrt{A(w)}},\frac{z^n_{2}w^m_{2}}{\sqrt{A(z)}\sqrt{A(w)}}\right\rangle
$$
$$
=\left(\int_{\mathbb{T}}\frac{e^{i(n_{1}-n_{2})\theta_{1}}}{|A^{*}(e^{i\theta_{1}})|}d\mu_{u}(\theta_{1})\right)\left(\int_{\mathbb{T}}\frac{e^{i(m_{1}-m_{2})\theta_{2}}}{|A^{*}(e^{i\theta_{2}})|}d\mu_{u}(\theta_{2})\right)
$$
$$
=\left(\int_{\mathbb{T}}e^{i((n_{1}-n_{2})\theta_{1})}d\theta_{1}\right)\left(\int_{\mathbb{T}}e^{i((m_{1}-m_{2})\theta_{2})}d\theta_{2}\right)=0
$$ since $\{z^k\}_{k\geq0}$ is orthonormal in $H^{2}(\mathbb{D})$.

As for completeness let $f(z,w)\in H_{\tilde u}(\mathbb{D}^2)$ be such that
\[\int_{\mathbb{T}^2}f(e^{i\theta_{1}},e^{i\theta_{2}})\frac{e^{-in\theta_{1}}e^{-im\theta_{2}}}{\sqrt{\overline{A^{*}(e^{i\theta_{1}})}}\sqrt{\overline{A^{*}(e^{i\theta_{2}})}}}d\mu_u(\theta_{1})d\mu_u(\theta_{2})=0\]
for all $n,m$. We claim that $f\equiv0$. We have, by Fubini's
theorem,
$$
0=\int_{\mathbb{T}^2}f(e^{i\theta_{1}},e^{i\theta_{2}})\frac{e^{-in\theta_{1}}e^{-im\theta_{2}}}{\sqrt{\overline{A^{*}(e^{i\theta_{1}})}}\sqrt{\overline{A^{*}(e^{i\theta_{2}})}}}d\mu_u(\theta_{1})d\mu_u(\theta_{2})
$$
$$
=\int_{\mathbb{T}}\left(\int_{\mathbb{T}}f(e^{i\theta_{1}},e^{i\theta_{2}})\frac{e^{-in\theta_{1}}}{\sqrt{\overline{A^{*}(e^{i\theta_{1}})}}}d\mu_u(\theta_{1})\right)\frac{e^{-im\theta_{2}}}{\sqrt{\overline{A^{*}(e^{i\theta_{2}})}}}d\mu_u(\theta_{2})
$$
Now, using the fact that $\{\frac{w^m}{\sqrt{A(w)}}\}$ is an
orthonormal basis for $H_u^2(\mathbb{D})$, we have that for all $n$
\[\int_{\mathbb{T}}f(e^{i\theta_{1}},e^{i\theta_{2}})\frac{e^{-in\theta_{1}}}{\sqrt{\overline{A^{*}(e^{i\theta_{1}})}}}d\mu_u(\theta_{1})=0\;\;\mbox{$\mu_u$}-a.e.\]
Let $E_n\subset\mathbb{T}$ be the set of measure zero where the
above equality does not hold and let $E=\bigcup_n E_n$. Then for
$e^{i\theta_{2}}\not\in E$,
\[\int_{\mathbb{T}}f(e^{i\theta_{1}},e^{i\theta_{2}})\frac{e^{-in\theta_{1}}}{\sqrt{\overline{A^{*}(e^{i\theta_{1}})}}}d\mu_u(\theta_{1})=0\]
for all $n$, and thus again using the fact that
$\{\frac{z^n}{\sqrt{A(z)}}\}$ is a complete orthonormal set in
$H_u^2(\mathbb{D})$, we have that $f(e^{i\theta_{1}},e^{i\theta_{2}})=0$ $\mu_u$-a.e. Therefore
$f=0$, $\mu_{\tilde u}$-a.e. which gives $f\equiv0$. Hence the claim follows.
\end{proof}

\begin{lem}\label{commutant}
The set of all bounded linear operators on $H_u^2(\mathbb{D})$
commuting with the operators of multiplication by the independent
variable $z$ is the set of all multiplication operators by
multipliers in $H^\infty(\mathbb{D})$.
\end{lem}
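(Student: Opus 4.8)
The plan is to reduce the statement to the classical description of the commutant of the unilateral shift on the ordinary Hardy space $H^2(\mathbb{D})$, transporting everything through the isometric isomorphism supplied by Shresta's theorem. Recall from the Remark following that theorem that $U\colon H_u^2(\mathbb{D})\to H^2(\mathbb{D})$, $Uf=\sqrt{A}\,f$, is an isometric isomorphism, where $A=e^{\tilde\alpha}$ is the non-vanishing holomorphic function with $|A^{*}|=\beta$ a.e.\ on $\mathbb{T}$; since $A$ never vanishes, $\sqrt{A}=e^{\tilde\alpha/2}$ is a well-defined holomorphic function and $U$ is genuinely multiplication by $\sqrt A$, with $U^{-1}g=g/\sqrt A$. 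The one structural observation I would record first is that $U$ intertwines the two shifts: because $\sqrt A$ and $z$ are both functions of $z$, the multiplication operators $M_{\sqrt A}$ and $M_z$ commute, so $UM_z=M_zU$ on $H_u^2(\mathbb{D})$. More generally $UM_\psi U^{-1}=M_\psi$ for every symbol $\psi=\psi(z)$, since any two multiplication operators commute.

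With this in hand the inclusion of the commutant into the analytic Toeplitz operators is immediate. Let $T$ be bounded on $H_u^2(\mathbb{D})$ with $TM_z=M_zT$, and set $\tilde T=UTU^{-1}$, a bounded operator on $H^2(\mathbb{D})$. Using $UM_z=M_zU$ one checks $\tilde T M_z=UTU^{-1}M_z=UTM_zU^{-1}=UM_zTU^{-1}=M_z\tilde T$, so $\tilde T$ commutes with the shift on $H^2(\mathbb{D})$. By the classical commutant theorem, the commutant of $M_z$ on $H^2(\mathbb{D})$ consists exactly of the analytic Toeplitz operators $M_\phi$ with $\phi\in H^\infty(\mathbb{D})$ (obtained by setting $\phi=\tilde T\,1$, noting $\tilde T p=p\phi$ for every polynomial $p$, and extending by density and boundedness; see \cite{Radjavi}). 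Hence there is $\phi\in H^\infty(\mathbb{D})$ with $\tilde T=M_\phi$, and transporting back gives $T=U^{-1}M_\phi U=M_\phi$ by the intertwining identity above. Thus $T$ is multiplication by $\phi\in H^\infty(\mathbb{D})$.

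For the reverse inclusion I would verify that every $\psi\in H^\infty(\mathbb{D})$ really does define a bounded multiplication operator on $H_u^2(\mathbb{D})$ commuting with $M_z$. Commutation is clear since $\psi$ and $z$ are functions of $z$; boundedness follows again from the isomorphism, since $UM_\psi U^{-1}=M_\psi$ is bounded on $H^2(\mathbb{D})$ with norm $\|\psi\|_\infty$, whence $M_\psi$ is bounded on $H_u^2(\mathbb{D})$ with the same norm. This simultaneously shows that the multiplier algebra of $H_u^2(\mathbb{D})$ is precisely $H^\infty(\mathbb{D})$, completing the identification. The only genuinely delicate point, and the place where I would be most careful, is the intertwining step together with the invocation of the scalar theorem: one must make sure $U$ sends $M_z$ to $M_z$ itself, rather than to some conjugated operator, so that the classical commutant theorem applies verbatim; the remainder is a formal transport of structure through the isometry. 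As a fully self-contained alternative one could bypass the classical theorem entirely by computing the matrix of $T$ against the one-variable orthonormal basis $\{z^n/\sqrt{A(z)}\}_{n\geq 0}$ of $H_u^2(\mathbb{D})$ used in the proof of Lemma \ref{dilation bidisc}, with respect to which $M_z$ acts as the unilateral shift; the relation $TM_z=M_zT$ then forces the matrix into analytic Toeplitz form, and one reads off the symbol as $\phi=\sqrt{A}\,T(1/\sqrt{A})\in H^\infty(\mathbb{D})$.
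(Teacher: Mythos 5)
Your proof is correct, but it does not follow the paper's (very short) route, and the difference is worth noting. The paper disposes of the lemma in one sentence: the claim ``is clear'' from the classical commutant theorem (Halmos, Problem 116) ``since $H^2_u(\mathbb{D})$ is a subspace of $H^2(\mathbb{D})$.'' As stated, that transfer is a real leap: an operator on $H^2_u(\mathbb{D})$ does not act on $H^2(\mathbb{D})$, and the two norms need not be equivalent --- from $\beta\geq c>0$ one gets $\|f\|_{H^2}\leq c^{-1/2}\|f\|_{H^2_u}$, but $\beta$ is merely $L^1$, so no reverse estimate is available --- hence the subspace inclusion alone does not move the commutant from one space to the other. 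Your argument supplies precisely the missing mechanism: the Shresta isomorphism $U=M_{\sqrt{A}}\colon H^2_u(\mathbb{D})\to H^2(\mathbb{D})$ is a surjective isometry that intertwines $M_z$ with $M_z$ and conjugates every multiplication operator to itself, so conjugation by $U$ identifies the two commutants, the classical theorem applies on the $H^2$ side, and transporting back yields $T=M_\phi$ with $\phi\in H^\infty(\mathbb{D})$; the same isometry also gives boundedness of every $H^\infty$ multiplier on $H^2_u(\mathbb{D})$, i.e.\ the converse inclusion, which the paper never verifies. (Your closing basis-matrix variant with $\{z^n/\sqrt{A(z)}\}_{n\geq 0}$ is the same argument written in coordinates.) In short, both proofs rest on the same classical theorem, but your reduction via the isometric isomorphism is rigorous exactly where the paper's one-line justification has a gap.
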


\begin{proof}
The claim is clear since the commutant of the multiplication
operator by independent variable on $H^2(\mathbb{D})$ is the set of
all multiplication operators by multipliers in
$H^\infty(\mathbb{D})$ by \cite[Problem 116]{Halmos} and
$H^2_u(\mathbb{D})$ is subspace of $H^2(\mathbb{D})$.
\end{proof}

\begin{proof}[Proof of Theorem \eqref{main}]
Suppose that there is a $\Theta$ in the class $\{\Theta_M\}$ such
that for any fixed $z_0\in\mathbb{D}$, $\Theta(z_0)$ commutes with
the multiplication operator by $w$ in $H_u^2(\mathbb{D})$. Since, by
Lemma \eqref{commutant}, the commutant of the multiplication
operator by $w$ in $H_u^2(\mathbb{D})$ is $H^\infty(\mathbb{D})$, it
follows that $\Theta(z_0)\in H^\infty(\mathbb{D})$ for every
$z_0\in\mathbb{D}$. Let's note that the function $z\rightarrow
\Theta(z)1$, where the function $1$ in $H_u^2(\mathbb{D})$ is
identically equal to 1, is an analytic function of $z$ taking values
in $H_u^2(\mathbb{D})$. Hence it follows that if $\phi=\Theta 1$,
then $\Theta(z_0)1$ coincides with a function $\phi(z_0,w)$, and the
family of functions $w\rightarrow \Theta(z_0)(w),\;\;w\in\mathbb{D}$
is a family generated by an analytic function $\phi$. To obtain that
$\phi$ is a $\tilde u$-inner function it is enough to show that the
multiplication operator by $\phi$ in $H_{\tilde u}^2(\mathbb{D}^2)$
is an isometry. If $g\in H_{\tilde u}^2(\mathbb{D}^2)$, then for
$g(z,w)=g_z(w)$, by \cite{Sahin}, we have
\[||g||^2=\int_{\mathbb{T}}\int_{\mathbb{T}}|g^*(\xi,\eta)|^2d\mu_u(\xi)d\mu_u(\eta)=\int_{\mathbb{T}}||g_\xi||^2d\mu_u(\eta).\]
Applying this to the function $\phi\varphi$ we obtain

\[||\phi\varphi||^2=\int_{\mathbb{T}}||\phi_\eta\varphi_\eta||^2d\mu_u(\eta)\]
and by assumption $\phi_\eta$ is an isometric operator for almost
all $\eta$, therefore $||\phi_\eta\varphi_\eta||=||\varphi_\eta||$
for almost all $\eta$ and $||\phi\varphi||^2=||\varphi||^2$. Thus
the operator $\Theta$ in $H_u^2(H_u^2(\mathbb{D}))$ and the
multiplication operator by $\phi=\Theta1$ in $H_{\tilde
u}^2(\mathbb{D}^2)$ are bounded operators which agree on vectors the
type $\frac{z^kw^l}{\sqrt{A(z)}\sqrt{A(w)}}$, $k,l\geq 0$ under the canonical isomorphism between
$H_u^2(H_u^2(\mathbb{D}))$ and $H_{\tilde u}^2(\mathbb{D}^2)$. In
Lemma (\ref{dilation bidisc}) we have proved that the elements
$\frac{z^kw^l}{\sqrt{A(z)}\sqrt{A(w)}}$, $k,l\geq0$
are dense in $H_{\tilde u}^2(\mathbb{D}^2)$.
Hence $\phi=\Theta 1$ and $\Theta$ correspond to each other.\\

For the converse direction now suppose that $M$ is a subspace generated by a $\tilde{u}$-inner function $\phi$ then for almost any $\xi \in \mathbb{T}$, $\phi^{*}(\xi, \cdot)$ is a $u$-inner function in $H^{\infty}(\mathbb{D})$ and the radial boundary values of the operator valued function $\Theta(z)$, where $\Theta(z)$ is the operator of multiplication by the function $\phi$, is an isometry almost everywhere. Hence the result follows.
\end{proof}

\end{document}